\newtheorem{theorem}{Theorem}[subsection]
\newtheorem{lemma}[theorem]{Lemma}
\newtheorem{remark}{Remark}
\newtheorem{cor}[theorem]{Corollary}
 \newtheorem{definition}{Definition}[subsection]
\begin{document}
 \title{A consistent kinetic model for a two-component mixture of polyatomic molecules}

          %For each author, make a block with the following macros:

          \author{Christian Klingenberg, Marlies Pirner, \thanks{Dept. of Mathematics at W\"uerzburg University, Emil-Fischer- Str. 40, W\"uerzburg, 97074, Germany (klingen@mathematik.uni-wuerzburg.de, marlies.pirner@mathematik.uni-wuerzburg.de). https://www.mathematik.uni-wuerzburg.de/$\sim$ klingen/Welcome.html}
          \and Gabriella Puppo \thanks{Universit\'a degli studi dell' Insubria, Via Valleggio, Como, 22100, Italy, (gabriella.puppo@uninsubria.it).}}

         \pagestyle{myheadings} \markboth{A TWO- SPECIES MODEL FOR POLYATOMIC MOLECULES}{CHRISTIAN KLINGENBERG, MARLIES PIRNER, GABRIELLA PUPPO}
         \date{}
          \maketitle

          \begin{abstract}
We consider a multi component gas mixture with translational and internal energy degrees of freedom assuming that the number of particles of each species remains constant. We will illustrate the derived model  in the case of two species, but the model can be easily generalized to multiple species. The two species are allowed to have different degrees of freedom in internal energy and are modelled by a system of kinetic ES-BGK equations featuring two interaction terms to account for momentum and energy transfer between the species. 
We prove consistency of our model: conservation properties, positivity of the temperature,  H-theorem and convergence to a global equilibrium in the form of a global Maxwell distribution. Thus, we are able to derive the usual macroscopic conservation laws. For numerical purposes we apply the Chu reduction to the developed model for polyatomic gases and give an application for a gas consisting of a mono atomic and a diatomic species. 
          \end{abstract}
\textbf{Keywords:}  multi-fluid mixture, kinetic model, ES-BGK approximation, polyatomic molecules
\\ \\ \textbf{AMS subject classification:} 35Q20; 35Q79; 82C40; 65Z05

%          \section{Introduction}\label{intro}

\section{Introduction}
 
 In this paper we shall concern ourselves with a kinetic description of gas mixtures for polyatomic molecules. In the case of mono atomic molecules and two species this is traditionally done via the Boltzmann equation for the density distributions $f_1$ and $f_2$, see for example \cite{Cercignani, Cercignani_1975}. Under certain assumptions the complicated interaction terms of the Boltzmann equation can be simplified by a so called BGK approximation, consisting of a collision frequency multiplied by the deviation of the distributions from local Maxwellians. This approximation should be constructed in a way such that it  has the same main properties of the Boltzmann equation namely conservation of mass, momentum and energy, further it should have an H-theorem with its entropy inequality and the equilibrium must still be Maxwellian.  BGK  models give rise to efficient numerical computations, which are asymptotic preserving, that is they remain efficient even approaching the hydrodynamic regime \cite{Puppo_2007, Jin_2010,Dimarco_2014, Bennoune_2008,  Bernard_2015, Crestetto_2012}. Evolution of a polyatomic gas is very important in applications, for instance air consists of a gas mixture of polyatomic molecules. But, most kinetic models modelling air deal with the case of a mono atomic  gas consisting of only one species.
 
  In the literature one can find two types of models for polyatomic molecules \textcolor{black}{using the classical description of  physics. We do not take into account quantum mechanical effects, for this see for example \cite{Orlach}. In classical thermodynamics,}  there are models which contain a sum of collision terms on the right-hand side corresponding to the elastic and inelastic collisions. Examples are the models of Rykov \cite{Rykov}, Holway \cite{Holway} and Morse \cite{Morse}. The other type of models contain only one collision term on the right-hand side taking into account both elastic and inelastic interactions. Examples for this are Bernard, Iollo, Puppo \cite{Bernard}  or the model by Bisi and Caceres \cite{Bisi} modelling chemical interactions.  In this paper we want to extend the model of Bernard, Iollo and Puppo \cite{Bernard} from one species of molecules to a gas mixture of polyatomic molecules. In contrast to mono atomic molecules, in a polyatomic gas energy is not entirely stored in the kinetic energy of its molecules but also in their rotational and vibrational modes. For simplification we present the model in the case of two species.\textcolor{black}{ We do not consider chemical reactions. For models which include chemical reactions, see for example \cite{Aliat}.}  We allow the two species to have different degrees of freedom in internal energy. For example, we may consider a mixture consisting of a mono atomic and a diatomic gas. In addition, we want to model it via an ES-BGK approach  in order to reproduce the correct Boltzmann hydrodynamic regime close to the asymptotic continuum limit. \textcolor{black}{The presence of a tensor in the attractors should allow to overcome the well known problem of incorrect Prandtl number (analogously to paper \cite{Perthame} for a single gas) but the proof is still lacking. At least, we will propose a model which is consistent in the special case of a mono-atomic single gas.} The ES-BGK approximation was suggested  by Holway in the case of one species \cite{Holway}. The H-Theorem of this model then was proven in \cite{Perthame}.  Brull and Schneider   relate this model to a minimization problem in \cite{brullschneider}.

The outline of the paper is as follows: in section \ref{sec1} we will present the extension of the BGK model for polyatomic molecules from \cite{Bernard} to two species of polyatomic molecules.  In section \ref{sec2}, we extend it to an ES-BGK model and check if it is well-defined. In sections \ref{sec3} to \ref{sec6} we prove the conservation properties and the H-theorem. We show the positivity of all temperatures and quantify the structure of the equilibrium. In section \ref{sec7}, we compare our model with an other model presented in the literature from \cite{Perthame} which considers an ES-BGK model for one species of polyatomic molecules. 
In section \ref{sec8} we apply the method of Chu reduction to our model in order to reduce the complexity of the variables for the rotational and vibrational energy degrees of freedom for numerical purposes.
In section \ref{sec9} we give an application in the case of a mono atomic and a diatomic molecule.

%We end with an outlook.

% because it is a natural way to add the collisions, ... simpler, ... easier to consider assymptotic limits.... \\
%generalization of models of Gross and Krook and Hamel \\
% proof of the conservation properties and the H-theorem... \\
%motivation of the generalization: in case of ions and electrons a certain relationship of the collision frequencies. \\
%We illustrate our model in case of two species.
%Particular attention will be given to the case in which the two species represent electrons and ions in a mixture of charged particles.

%\newpage
\section{The BGK approximation}
\label{sec1}
\textcolor{black}{In this section we first want to motivate how our model with several coupled equations will look like. For the convenience of the reader, we will summarize all this equations again at the end of the section such that one sees the whole model at a glance.}
For simplicity in the following we consider a mixture composed of two different species. Let $x\in \mathbb{R}^d$ and $v\in \mathbb{R}^d, d \in \mathbb{N}$ be the phase space variables  and $t\geq 0$ the time. Let $M$ be the total number of different rotational and vibrational degrees of freedom and $l_k$ the number of internal degrees of freedom of species $k$, $k=1,2$. Note that the sum $l_1+l_2$ is not necessarily equal to $M$, because the two species could both have the same internal degree of freedom. Then $\eta \in \mathbb{R}^{M}$  is the variable for the internal energy degrees of freedom, $\eta_{l_k} \in \mathbb{R}^{M}$ coincides with $\eta$ in the components corresponding to the internal degrees of freedom of species $k$ and is zero in the other components. For example, we can consider two species both composed of molecules consisting of two atoms, such that the molecules have rotational degrees of freedom in addition to the three translational degrees of freedom. In general, a molecule consisting of two atoms has three possible axes around which it can rotate. But since the energy needed to rotate the molecule around the axes parallel to the line connecting the two atoms is very high (see for example \cite{Kelly}), this does not occur, so we have two rotational degrees of freedom. In this example we have $M=l_1=l_2=2$. \\ Since we want to describe two different species, our kinetic model has two distribution functions $f_1(x,v,\eta_{l_1},t)> 0$ and $f_2(x,v,\eta_{l_2},t) > 0$. 
 Furthermore we relate the distribution functions to  macroscopic quantities by mean-values of $f_k$, $k=1,2$ as follows
\begin{align}
\int f_k(v, \eta_{l_k}) \begin{pmatrix}
1 \\ v \\ \eta_{l_k} \\ m_k |v-u_k|^2 \\ m_k |\eta_{l_k} - \bar{\eta}_k |^2 \\ m_k (v-u_k(x,t)) \otimes (v-u_k(x,t))
\end{pmatrix} 
dv d\eta_{l_k}=: \begin{pmatrix}
n_k \\ n_k u_k \\ n_k \bar{\eta}_k \\ d n_k T_k^{t} \\ l_k n_k T_k^{r} \\ \mathbb{P}_k
\end{pmatrix} , \quad k=1,2,
\label{moments}
\end{align} 
where $n_k$ is the number density, $u_k$ the mean velocity,  $T_k^{t}$ the mean temperature of the translation, $T_k^{r}$ the mean temperature of the internal energy degrees of freedom for example rotation or vibration and $\mathbb{P}_k$ the pressure tensor of species $k$, $k=1,2$. Note that in this paper we shall write $T_k^{t}$ and $T_k^{r}$ instead of $k_B T_k^{t}$ and $k_B T_k^{r}$, where $k_B$ is Boltzmann's constant. In  the following, we will require $\bar{\eta}_k=0$, which means that the energy in rotations clockwise is the same as in rotations counter clockwise. Similar for vibrations.

The distribution functions are determined by two equations to describe their time evolution. Furthermore we only consider binary interactions. 
So the particles of one species can interact with either themselves or with particles of the other species. In the model this is accounted for by introducing two interaction terms in both equations. These considerations allow us to write formally the system of equations for the evolution of the mixture. The following structure containing a sum of the collision operators is also given in \cite{Cercignani, Cercignani_1975}. \\
We are interested in a BGK approximation of the interaction terms. This leads us to define equilibrium distributions not only for each species itself but also for the two interspecies  distributions.  Then the model can be written as:

\begin{align} \begin{split} \label{BGK}
\partial_t f_1 + \nabla_x \cdot (v f_1)   &= \nu_{11} n_1 (M_1 - f_1) + \nu_{12} n_2 (M_{12}- f_1),
\\ 
\partial_t f_2 + \nabla_x \cdot (v f_2) &=\nu_{22} n_2 (M_2 - f_2) + \nu_{21} n_1 (M_{21}- f_2), 
\end{split}
\end{align}
with the Maxwell distributions
\begin{align} 
\begin{split}
M_k(x,v,\eta_{l_k},t) &= \frac{n_k}{\sqrt{2 \pi \frac{\Lambda_k}{m_k}}^d } \frac{1}{\sqrt{2 \pi \frac{\Theta_k}{m_k}}^{l_k}} \exp({- \frac{|v-u_k|^2}{2 \frac{\Lambda_k}{m_k}}}- \frac{|\eta_{l_k}|^2}{2 \frac{\Theta_k}{m_k}}), 
\\
%M_2(x,v,\eta_{l_2},t) &= \frac{n_2}{\sqrt{2 \pi \frac{\Lambda_2}{m_2}}^d } \frac{1}{\sqrt{2 \pi \frac{\Theta_2}{m_2}}^{l_2}} \exp({- \frac{|v-u_2|^2}{2 \frac{\Lambda_2}{m_2}}}- \frac{|\eta_{l_2}|^2}{2 \frac{\Theta_2}{m_2}})
%\\
M_{kj}(x,v,\eta_{l_k},t) &= \frac{n_{kj}}{\sqrt{2 \pi \frac{\Lambda_{kj}}{m_k}}^d } \frac{1}{\sqrt{2 \pi \frac{\Theta_{kj}}{m_k}}^{l_k}} \exp({- \frac{|v-u_{kj}|^2}{2 \frac{\Lambda_{kj}}{m_k}}}- \frac{|\eta_{l_k}|^2}{2 \frac{\Theta_{kj}}{m_k}}), 
%\\
%M_{21}(x,v,\eta_{l_2},t) &= \frac{n_{21}}{\sqrt{2 \pi \frac{\Lambda_{21}}{m_2}}^d } \frac{1}{\sqrt{2 \pi \frac{\Theta_{21}}{m_2}}^{l_2}} \exp({- \frac{|v-u_{21}|^2}{2 \frac{\Lambda_{21}}{m_2}}}- \frac{|\eta_{l_2}|^2}{2 \frac{\Theta_{21}}{m_2}})
\end{split}
\label{BGKmix}
\end{align}
for $ j,k =1,2, j \neq k$, 
where $\nu_{11} n_1$ and $\nu_{22} n_2$ are the collision frequencies of the particles of each species with itself, while $\nu_{12} n_2$ and $\nu_{21} n_1$ are related to interspecies collisions. \textcolor{black}{In this model, the  collision  frequencies  between  interspecies  operators can be  taken
different allowing molecular mass discrepancies. This point is relevant for example in the context of plasma physics, because the mass ratio between electrons and ions is very small.} 
To be flexible in choosing the relationship between the collision frequencies, we now assume the relationship
\begin{equation} 
\nu_{12}=\varepsilon \nu_{21}, \quad 0 < \frac{l_1}{l_1+l_2}\varepsilon \leq 1.
\label{coll}
\end{equation}
The restriction $\frac{l_1}{l_1+l_2} \varepsilon \leq 1$ is without loss of generality.
If $\frac{l_1}{l_1+l_2}\varepsilon >1$, exchange the notation $1$ and $2$ and choose $\frac{1}{\varepsilon}.$ In addition, we assume that all collision frequencies are positive.

Since rotational/vibrational and translational degrees of freedom relax at a different rate, $T_k^{t}$ and $T_k^{r}$ will first relax to partial temperatures $\Lambda_k$ and $\Theta_k$ respectively. Conservation of internal energy then requires that at each time
\begin{align}
\frac{d}{2} n_k \Lambda_k = \frac{d}{2} n_k T_k^{t} +\frac{l_k}{2} n_k T_k^{r} - \frac{l_k}{2} n_k \Theta_k, \quad k=1,2. \label{internal}
\end{align}  
Thus, $\Lambda_k$ can be written as a function of $\Theta_k.$
In equilibrium we expect the two temperatures $\Lambda_k$ and $\Theta_k$ to coincide, so we close the system by adding the equations
%\begin{align}
%\partial_t M_k + v \cdot \nabla_x M_k = \frac{\nu_{kk} n_k}{Z_r^k} \frac{d+l_k}{d} (\tilde{M}_k - M_k), \quad k=1,2
%\label{kin_Temp}
%\end{align}
%\textcolor{cyan}{New version: \begin{align}
% \partial_t M_k + v \cdot \nabla_x M_k = \frac{\nu_{kk} n_k}{Z_r^k} \frac{d+l_k}{d} (\tilde{M}_k - M_k)+ \frac{\nu_{kj} n_j}{Z_r^k} \frac{d+l_k}{d} (\tilde{M}_{kj} - M_k)+ \nu_{kj} n_j (M_{kj} - M_k) , \quad k=1,2 \end{align}
% }
% \textcolor{red}{
 \begin{align}
 \begin{split}
 \partial_t M_k + v \cdot \nabla_x M_k = \frac{\nu_{kk} n_k}{Z_r^k} \frac{d+l_k}{d} (\widetilde{M}_k - M_k)&+ \nu_{kk} n_k (M_k -f_k) \\&+ \nu_{kj} n_j (M_{kj} - f_k) ,
 \end{split}
 \label{kin_Temp}
 \end{align}
 %}
for $j,k=1,2, j \neq k$, where $Z_r^k$ are given parameters corresponding to the different rates of decays of translational and rotational/vibrational degrees of freedom. %\textcolor{cyan}{In the second term we can also choose a different $Z_r^k$}.
 Here $M_k$ is given by
\begin{align} 
M_k(x,v,\eta_{l_k},t) = \frac{n_k}{\sqrt{2 \pi \frac{\Lambda_k}{m_k}}^d } \frac{1}{\sqrt{2 \pi \frac{\Theta_k}{m_k}}^{l_k}} \exp({- \frac{|v-u_k|^2}{2 \frac{\Lambda_k}{m_k}}}- \frac{|\eta_{l_k}|^2}{2 \frac{\Theta_k}{m_k}}), \quad k=1,2,
\label{Maxwellian}
\end{align}
and $\widetilde{M}_k$ is given by 
\begin{align}
\widetilde{M}_k= \frac{n_k}{\sqrt{2 \pi \frac{T_k}{m_k}}^{d+l_k}} \exp \left(- \frac{m_k |v-u_k|^2}{2 T_k}- \frac{m_k|\eta_{l_k}|^2}{2 T_k} \right), \quad k=1,2.
\label{Max_equ}
\end{align}
where $T_k$ is the total equilibrium temperature and is given by 
\begin{align}
T_k:= \frac{d \Lambda_k + l_k \Theta_k}{d+l_k}= \frac{d T^{t}_k + l_k T^{r}_k}{d+l_k}.
\label{equ_temp}
\end{align}
The second equality follows from \eqref{internal}.
%\textcolor{cyan}{and $\tilde{M}_{kj}$ is given by 
%$$ \tilde{M}_{kj} = \frac{n_k}{\sqrt{2 \pi \frac{T_k}{m_k}}^{d+l_k}} \exp \left(- \frac{m_k |v-u_k|^2}{2 T_{kj}^{\Lambda}}- \frac{m_k|\eta_{l_k}|^2}{2 T_{kj}^{\Theta}} \right), \quad k,j=1,2, ~ k \neq j,$$
%where $T_{kj}^{\Lambda}$ and $T_{kj}^{\Theta}$ are given by
%\begin{align}
% T_{kj}^{\Lambda} = \Lambda_k + \frac{m}{m+n} (\Theta_k - \Lambda_j), \quad T_{kj}^{\Theta} = \Theta_k + \frac{n}{n+m} (\Lambda_j - \Theta_k) = \frac{n \Lambda_j + m \Theta_k}{n+m}
% \end{align}
%The motivation of choosing $T_{kj}^{\Theta}$ was to construct the exchange term $\Lambda_j - \Theta_k$. The motivation of $T_{kj}^{\Lambda}$ see below}
If we multiply \eqref{kin_Temp} by $|\eta_{l_k}|^2$, integrate with respect to $v$ and $\eta_{l_k}$ and use \eqref{equ_temp}, we obtain  
\begin{align}
\begin{split}
\partial_t(n_k \Theta_k) +   \nabla_x\cdot (n_k \Theta_k u_k) = \frac{\nu_{kk} n_k}{Z_r^k} n_k (\Lambda_k - \Theta_k)&+ \nu_{kk} n_k n_k (\Theta_k - T_k^{r}) \\ &+ \nu_{kj} n_j n_k(\Theta_{kj} - T_k^{r}) ,\quad k=1,2.
\end{split}
\label{relax}
\end{align}
 We obtained a macroscopic equation which describes the relaxation of the temperature $\Theta_k$ towards the temperature $\Lambda_k$ and the relaxation of $\Theta_k$ towards the rotational and vibrational temperature $T_k^{r}$ and of $T_k^{r}$ relaxing towards the mixture temperature $\Theta_{kj}$ in accordance with equation \eqref{BGK}. %We will see this in section 2.2 in theorem \ref{consenergy}:
%\textcolor{cyan}{new version: \begin{align}
%\partial_t \Theta_k + u_k \cdot \nabla_x \Theta_k = \frac{\nu_{kk} n_k}{Z_r^k} (\Lambda_k - \Theta_k)+ \frac{\nu_{kj} n_j}{Z_r^k}(\Lambda_j - \Theta_k) + \nu_{kj} n_j (\Theta_j - \Theta_k) ,\quad k=1,2.
%\label{relax}
%\end{align} }
%\textcolor{red}{\begin{align*}
%\partial_t \Theta_k + u_k \cdot \nabla_x \Theta_k = \frac{\nu_{kk} n_k}{Z_r^k} (\Lambda_k - \Theta_k) + \nu_{kj} n_j (\Theta_j - \Theta_k) ,\quad k=1,2.
%%\label{relax}
%\end{align*} }
%\textcolor{magenta}{
Note that  equation \eqref{relax} together with mass, momentum and total energy conservation, is equivalent to \eqref{kin_Temp}. 
% {\sl I  think for equality we need an equation (3) of the form
%$$ \partial_t M_k + v \cdot \nabla_x M_k = \frac{\nu_{kk} n_k}{Z_r^k} \frac{d+l_k}{d} (\tilde{M}_k - M_k)+ \frac{\nu_{kj} n_j}{Z_r^k} \frac{d+l_k}{d} (\tilde{M}_{kj} - M_k) , \quad k=1,2 $$
%with a certain mixture Maxwellian $\tilde{M}_{kj}$.
%}}
%\textcolor{cyan}{
In addition,  \eqref{BGK} and \eqref{kin_Temp} are consistent. If we multiply the equations for species $k$ of \eqref{BGK} and \eqref{kin_Temp}  by $v$ and integrate with respect to $v$ and $\eta_{l_k}$, we get in both cases for the right-hand side 
$$ \nu_{kj} n_j n_k  (u_{jk} - u_k),$$ and if we compute the total internal energy of both equations, we obtain in both cases $$ \frac{1}{2} \nu_{kj} n_k n_j [d \Lambda_{jk} + l_j \Theta_{jk} - ( d \Lambda_j + l_j \Theta_j)].$$
We will see this in \textcolor{black}{section \ref{sec3}} in theorem \ref{consenergy}. \\
%The motivation of choosing $T_{kj}^{\Lambda}$ was to guarantee the last one, namely to ensure that $\tilde{M}_{kj}$ has the same internal energy as $M_k$
%}
 \\
We recall that we assume that the mean values of the momentum due to the  internal degrees of freedom $\bar{\eta}_1$, $\bar{\eta}_2$, $\bar{\eta}_{12}$ and $\bar{\eta}_{21}$ are zero.  The structure of the collision terms ensures that at equilibrium or when $\nu_{kj} \rightarrow \infty$ the distribution functions become Maxwell distributions.
With this choice of the Maxwell distributions $M_1$ and $M_2$ have the same densities, mean velocities and internal energies as $f_1$ respective $f_2$. This guarantees the conservation of mass, momentum and energy in interactions of one species with itself. 
The remaining parameters $n_{12}, n_{21}, u_{12}, u_{21}, \Lambda_{12}$ , $\Lambda_{21}$, $\Theta_{12}$ and $\Theta_{21}$ will be determined further down using conservation of the number of particles, total momentum and total energy, together with some symmetry considerations.
\textcolor{black}{We will determine $n_{12}$ and $n_{21}$ in equation \eqref{density} using conservation of the number of particles. The velocities $u_{12}$ and $u_{21}$ will be determined in equations \eqref{convexvel} and \eqref{veloc} by using conservation of total momentum. Last, the parameters  $\Lambda_{12}$ , $\Lambda_{21}$, $\Theta_{12}$ and $\Theta_{21}$ will be determined in theorem \ref{consenergy} and remark \ref{detpar}.  }
\\ \\
\textcolor{black}{Now, for the convenience of the reader, we want to write down our model again that one sees on the first view which equations we want to couple. Our BGK model for two species coupled with one relaxation equation and one algebraic equation for the temperatures can be written as
\begin{align*} \begin{split} %\label{BGK}
\partial_t f_1 + \nabla_x \cdot (v f_1)   &= \nu_{11} n_1 (M_1 - f_1) + \nu_{12} n_2 (M_{12}- f_1),
\\ 
\partial_t f_2 + \nabla_x \cdot (v f_2) &=\nu_{22} n_2 (M_2 - f_2) + \nu_{21} n_1 (M_{21}- f_2), 
\end{split}
\end{align*}
\begin{align*}
\frac{d}{2} n_k \Lambda_k = \frac{d}{2} n_k T_k^{t} +\frac{l_k}{2} n_k T_k^{r} - \frac{l_k}{2} n_k \Theta_k,  %\label{internal}
\end{align*}
\begin{align*}
\begin{split}
\partial_t(n_k \Theta_k) +   \nabla_x\cdot (n_k \Theta_k u_k) = \frac{\nu_{kk} n_k}{Z_r^k} n_k (\Lambda_k - \Theta_k)&+ \nu_{kk} n_k n_k (\Theta_k - T_k^{r}) \\ &+ \nu_{kj} n_j n_k(\Theta_{kj} - T_k^{r}) ,
\end{split}
\end{align*} for $k,j=1,2, k\neq j.$}

\section{Extension to an ES-BGK model}
\label{sec2}
\textcolor{black}{In this section again we first want to motivate how our model with several coupled equations will look like. For the convenience of the reader, we will summarize all this equations again at the end of the section such that one sees the whole model at a glance.}
It is well known that a drawback of the BGK approximation is its incapability of reproducing the correct Boltzmann hydrodynamic regime in the asymptotic continuum limit. Therefore, a modified version called ES-BGK model was suggested  by Holway in the case of one species \cite{Holway}. In this standard ES-BGK model, in the Maxwellian $M_k$, the scalar temperature $T_k^t$ related to the distribution function $f_k$ will be replaced by a linear combination of the temperature $T_k^t$ and the pressure tensor $\mathbb{P}_k$. In the polyatomic case described in this paper the translational temperature $T_k^t$ is different from the temperature $\Lambda_k$ of the Maxwellian $M_k$ given by \eqref{Maxwellian}. Now, we want to extend this temperature $\Lambda_k$ to a tensor $\Lambda_k^{ten}$ with $\text{trace}(\Lambda_k^{ten})=n_k \Lambda_k$ such that again we can consider a linear combination of the temperature $\Lambda_k$ and the tensor $\Lambda_k^{ten}$. In the BGK case described in the previous section we determined the time evolution of $\Theta_k$ by considering equation \eqref{kin_Temp} with the Maxwellian $M_k$ given by \eqref{Maxwellian} and the Maxwellian $\widetilde{M}_k$ given by \eqref{Mtilde} with the total equilibrium temperature $T_k$ given by \eqref{equ_temp} which leads to a time evolution of $\Theta_k$ given by \eqref{relax}. $\Lambda_k$ is then obtained by $\eqref{internal}$. Now, in the ES-BGK case
\textcolor{black}{we determine the time evolution of $f_k$ in the ES-BGK case by
\begin{align} \begin{split} \label{ESBGK}
\partial_t f_k + \nabla_x \cdot (v f_k)   &= \nu_{kk} n_k (G_k(f_k) - f_k) + \nu_{kj} n_j (M_{kj}(f_k,f_j)- f_k),
%\\ 
%\partial_t f_2 + v \cdot \nabla_x  f_2 &=\nu_{22} n_2 (G_2(f_2) - f_2) + \nu_{21} n_1 (M_{21}(f_1,f_2)- f_2) 
\end{split}
\end{align}
for $  k,j =1,2, k \neq j.$ 
To keep it as simple as possible we only replace the collision operators which represent the collisions of a species with itself by the ES-BGK collision operator for one species suggested in \cite{AndriesPerthame2001}. \textcolor{black}{The presence of a tensor in the attractors should allow to
overcome the well known problem of incorrect Prandtl number (analogously to paper [2]
for a single gas) but the proof is still lacking. We wanted to ensure that the is consistent with the special case of a single mono atomic gas.}  Other possible extensions are illustrated in the mono atomic case for gas mixtures in \cite{Pirner2}.For further references we denote the relaxation operators by $Q_{11}, Q_{12}, Q_{21}$ and $Q_{22}$.
Then we  define a function $G_k$ with a linear combination $\Lambda_k^{ES}$ given by
\begin{align*}
\Lambda_k^{ES} = (1- \mu_k) \Lambda_k \textbf{1}_d + \mu_k \frac{\Lambda_k^{ten}}{n_k}, \quad k=1,2.
\end{align*}
with $\mu_k \in \mathbb{R}$, $k=1,2$ being free parameters which we can choose in a way to fit physical parameters in the Navier-Stokes equations like the viscosity coefficient,
analogously as in the standard ES-BGK model given by
{\footnotesize
\begin{align} 
\begin{split}
G_k(f_k)(x,v,\eta_{l_k},t) = \frac{n_k}{\sqrt{\det(2 \pi \frac{\Lambda^{ES}_k}{m_k})} } \frac{1}{\sqrt{2 \pi \frac{\Theta_k}{m_k}}^{l_k}}  \exp\left({- \frac{1}{2} (v-u_k) \cdot \left(\frac{\Lambda_k^{ES}}{m_k}\right)^{-1} \cdot (v-u_k)}- \frac{1}{2} \frac{m_k |\eta_{l_k}|^2}{\Theta_k}\right) , 
\\
%G_2(f_2)(x,v,\eta_{l_2},t) = \frac{n_2}{\sqrt{\det(2 \pi \frac{\Lambda^{ES}_2}{m_2})} } \frac{1}{\sqrt{2 \pi \frac{\Theta_2}{m_2}}^{l_2}}  \exp\left({- \frac{1}{2} (v-u_2) \cdot \left(\frac{\Lambda_2^{ES}}{m_2}\right)^{-1} \cdot (v-u_2)}- \frac{1}{2} \frac{m_2 |\eta_{l_2}|^2}{\Theta_2}\right) 
%\\
%G^{poly}_{12}(f_1,f_2)(x,v,\eta_{l_1},t) = \frac{n_1}{\sqrt{\det(2 \pi \frac{\Lambda^{ES}_{12}}{m_1})} } \frac{1}{\sqrt{\det(2 \pi \frac{\Theta^{ES}_{12}}{m_1})}} \\ \exp({- \frac{1}{2} (v-u_{12}) \cdot (\frac{\Lambda_{12}^{ES}}{m_1}) \cdot (v-u_{12})}- \frac{1}{2} \eta_{l_1} \cdot (\frac{\Theta_{12}^{ES}}{m_1}) \cdot \eta_{l_1})
%\\
%G^{poly}_{21}(f_1,f_2)(x,v,\eta_{l_2},t) = \frac{n_2}{\sqrt{\det(2 \pi \frac{\Lambda^{ES}_{21}}{m_2})} } \frac{1}{\sqrt{\det(2 \pi \frac{\Theta^{ES}_{21}}{m_2})}} \\ \exp({- \frac{1}{2} (v-u_{21}) \cdot (\frac{\Lambda_{21}^{ES}}{m_1}) \cdot (v-u_{21})}- \frac{1}{2} \eta_{l_2} \cdot (\frac{\Theta_{21}^{ES}}{m_2}) \cdot \eta_{l_2})
\end{split}
\label{ESBGKmix}
\end{align}}
for $k=1,2$. 
}

In order to determine the time evolution of $\Lambda_k^{ten}$ we consider the equation 
\begin{align}
\partial_t \widehat{G}_k + v \cdot \nabla_x \widehat{G}_k = \frac{\nu_{kk} n_k}{Z_r^k} \frac{d+l_k}{d} (\widetilde{G}_k - \widehat{G}_k) + \nu_{kk} n_k (G_k-f_k) + \nu_{kj} n_j (M_{kj} - f_k), \quad k=1,2,
\label{kin_TempES}
\end{align}
with the extended Maxwellian  $\widehat{G}_k$ given by 
\begin{align}
\widehat{G}_k= \frac{n_k}{\sqrt{\det( 2 \pi \frac{\Lambda_k^{ten}}{m_k})}} \frac{1}{\sqrt{2 \pi \frac{T_k^{r}}{m_k}}^{l_k}} \exp \left(- \frac{1}{2} (v-u_k) \cdot \left(\frac{\Lambda_k^{ten}}{m_k}\right)^{-1}\cdot (v- u_k)- \frac{m_k|\eta_{l_k}|^2}{2 \textcolor{black}{\Theta_k}} \right), 
\label{Max_equES}
\end{align}
for $k=1,2$, and the extended Maxwellian $\widetilde{G}_k$ given by
\begin{align}
\widetilde{G}_k = \frac{n_k}{\sqrt{\det(2\pi \frac{T_k^{ten}}{m_k}})} \frac{1}{\sqrt{2 \pi \frac{T_k}{m_k}}^{l_k}} \exp \left( - \frac{1}{2}(v-u_k) \cdot \left( \frac{T_k^{ten}}{m_k}\right)^{-1} \cdot (v-u_k) - \frac{1}{2}  \frac{m_k |\eta_{l_k}|^2 }{T_k}  \right).
\end{align}
The function $\widetilde{G}_k$ has the total equilibrium temperature $T_k$ and the pressure tensor of $f_k$ on the off-diagonals, namely
\begin{align}
\begin{split}
(T_k^{ten})_{ii} &= T_k \hspace{3.2cm}\text{for} \quad i=1,\dots d, \\
(T_k^{ten})_{ij} &= \frac{d}{d+l_k} \left(\frac{\mathbb{P}_k}{n_k} \right)_{ij} \hspace{1.4cm} \text{for} \quad i,j = 1, \dots d, i \neq j.
\end{split}
\label{Ten}
\end{align}
The factor $\frac{d}{d+l_k}$ in front of $\mathbb{P}_k$ in the definition of $T_k^{ten}$ has the following reason. The temperature \textcolor{black}{$T_k$} given by \eqref{equ_temp} is a convex combination of $T_k^{t}$ and $T_k^{r}$. Now, the off-diagonal elements of $T_k^{ten}$ have the same structure. It is a convex combination of the pressure tensor $\mathbb{P}_k$ and the tensor corresponding to the rotational and vibrational temperature. But since the rotational effects are diagonal, we have $(T_k^{ten})_{ij}=( \frac{d}{d+l_k} \mathbb{P} + \frac{l_k}{d+l_k} 0)_{\textcolor{black}{ij}}$ for $i \neq j$.

We only extended $\Lambda_k$ to a tensor and keep $\Theta_k$ as it is. This has the following reason. Since we assumed $\bar{\eta}_{lk}=0$, the microscopic velocities related to the internal degrees of freedom are symmetric and then we do not  distinguish different directions as we do in the translational degrees of freedom.

Equation \eqref{kin_TempES} leads to a time evolution of $\Lambda_k^{ten}$ given by 
\textcolor{black}{
\begin{align}
\begin{split}
\partial_t (n_k( \Lambda_k^{ten})_{ij}) &+   \nabla_x \cdot(n_k ( (\Lambda_k^{ten})_{ij})u_k) = \frac{\nu_{kk} n_k}{Z_r^k} \frac{d+l_k}{d} n_k( (T_k^{ten})_{ij} - (\Lambda_k^{ten})_{ij}) \\&+ \nu_{kk} n_k n_k ((\Lambda_k^{ES})_{ij} - (\mathbb{P}_k)_{ij}) + \nu_{kj} n_j n_k (\Theta_{kj} - T_k^{rot}) \delta_{ij},
\end{split}
\label{eqten}
\end{align}}
The evolution of $\Theta_k$ is then obtained from \eqref{internal}.   %With this simple extension the two-species model can be written as:
\\ \\
\textcolor{black}{For the convenience of the reader we want to summarize our model again. In the case of the ES-BGK model we wanted to use ES-BGK equations for the mixture coupled with a relaxation equation and an algebraic equation for the temperatures.
\begin{align*} \begin{split} 
\partial_t f_k + \nabla_x \cdot (v f_k)   &= \nu_{kk} n_k (G_k(f_k) - f_k) + \nu_{kj} n_j (M_{kj}(f_k,f_j)- f_k),
%\\ 
%\partial_t f_2 + v \cdot \nabla_x  f_2 &=\nu_{22} n_2 (G_2(f_2) - f_2) + \nu_{21} n_1 (M_{21}(f_1,f_2)- f_2) 
\end{split}
\end{align*}
\begin{align*}
\frac{d}{2} n_k \Lambda_k = \frac{d}{2} n_k T_k^{t} +\frac{l_k}{2} n_k T_k^{r} - \frac{l_k}{2} n_k \Theta_k,  %\label{internal}
\end{align*}
\begin{align*}
\begin{split}
\partial_t (n_k( \Lambda_k^{ten})_{ij}) &+   \nabla_x \cdot(n_k ( (\Lambda_k^{ten})_{ij})u_k) = \frac{\nu_{kk} n_k}{Z_r^k} \frac{d+l_k}{d} n_k( (T_k^{ten})_{ij} - (\Lambda_k^{ten})_{ij}) \\&+ \nu_{kk} n_k n_k ((\Lambda_k^{ES})_{ij} - (\mathbb{P}_k)_{ij}) + \nu_{kj} n_j n_k (\Theta_{kj} - T_k^{rot}) \delta_{ij},
\end{split}
\end{align*}for $  k,j =1,2, k \neq j.$
\\ \\ }

%\textcolor{magenta}{With this changes I can show an H-Theorem, but the entropy depend on $K_r^k, l_k$ and $d$.
%}

Since $G_k$ involves the term $(\Lambda_k^{ES})^{-1}$ and $\widetilde{G}_k$ involves the term $(T_k^{ten})^{-1}$ we have to check if $\Lambda_k^{ES}$ and $T_k^{ten}$ are invertible.
\begin{lemma}
Assume that $f_k$ and $\widehat{G}_k$ are positive solutions to \eqref{ESBGK} and \eqref{kin_TempES}.  Then $\Lambda_k^{ten}$ and $T_k^{ten}$  have strictly positive eigenvalues. Especially  $T_k^{ten}$ is invertible.
\end{lemma}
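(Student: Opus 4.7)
The plan is to realize each tensor as a positive quadratic form built from distributions whose positivity is either assumed (for $\widehat{G}_k$) or inherited from $f_k>0$ (for the $\mathbb{P}_k$-content of $T_k^{ten}$), then test against an arbitrary $\xi\in\mathbb{R}^d$.

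For $\Lambda_k^{ten}$, the clean route is to observe that $\Lambda_k^{ten}/m_k$ is precisely the velocity-covariance of the Gaussian $\widehat{G}_k$ given by \eqref{Max_equES}. A direct moment computation against the explicit Gaussian form yields
\begin{equation*}
n_k (\Lambda_k^{ten})_{ij} = \int m_k (v-u_k)_i (v-u_k)_j\, \widehat{G}_k\, dv\, d\eta_{l_k},
\end{equation*}
so that for any nonzero $\xi\in\mathbb{R}^d$,
\begin{equation*}
\xi^{\top} \Lambda_k^{ten}\xi = \frac{1}{n_k}\int m_k\bigl[(v-u_k)\cdot\xi\bigr]^{2}\,\widehat{G}_k\, dv\, d\eta_{l_k} > 0,
\end{equation*}
because the integrand is nonnegative, vanishes only on the measure-zero hyperplane $\{(v-u_k)\cdot\xi = 0\}$, and $\widehat{G}_k>0$ by hypothesis. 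Hence all eigenvalues of $\Lambda_k^{ten}$ are strictly positive.

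For $T_k^{ten}$ the moment-integration shortcut is unavailable, because the diagonal of $T_k^{ten}$ is defined in \eqref{Ten} to be the scalar $T_k$ rather than $\frac{d}{d+l_k}(\mathbb{P}_k/n_k)_{ii}$; so $T_k^{ten}$ is not literally the second-moment tensor of any single positive distribution. I would instead decompose
\begin{equation*}
T_k^{ten} = \frac{d}{d+l_k}\frac{\mathbb{P}_k}{n_k} + D,\qquad D_{ii}=T_k-\frac{d}{d+l_k}\frac{(\mathbb{P}_k)_{ii}}{n_k},\quad D_{ij}=0\ (i\neq j).
\end{equation*}
The first summand is positive definite because $f_k>0$ makes $\mathbb{P}_k$ positive definite. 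For the diagonal $D$, one uses the trace identity $\operatorname{tr}(\mathbb{P}_k/n_k)=dT_k^t$ together with $T_k^r>0$ (itself a consequence of $f_k>0$ via \eqref{moments}) to argue $D_{ii}>0$. A sum of a positive-definite matrix and a positive diagonal matrix is positive definite, which gives the eigenvalue claim for $T_k^{ten}$; invertibility is immediate.

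The main obstacle is the last step: the inequality $D_{ii}>0$ reduces to $l_k T_k^r > d\bigl((\mathbb{P}_k/n_k)_{ii}-T_k^t\bigr)$, i.e.\ the rotational temperature must dominate the anisotropy of the translational pressure tensor. I expect a clean resolution to require either a sharper bound on each $(\mathbb{P}_k/n_k)_{ii}$ than the crude $(\mathbb{P}_k/n_k)_{ii}\le d T_k^t$ coming directly from the trace identity, or a restriction on the free parameter $\mu_k$ analogous to the standard admissible range in the Andries--Perthame closure \cite{Perthame}.
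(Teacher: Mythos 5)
Your treatment of $\Lambda_k^{ten}$ is correct and is essentially the paper's own argument: both identify $n_k\Lambda_k^{ten}$ with the velocity covariance of the Gaussian $\widehat{G}_k$ from \eqref{Max_equES} and test the resulting quadratic form against an arbitrary vector, with strictness because the integrand vanishes only on a hyperplane. The genuine gap is in the second half, and you have located it precisely but not closed it: the entrywise positivity $D_{ii}>0$ on which your decomposition relies is in fact false in general. Writing $P:=\mathbb{P}_k/n_k$, one has $D_{ii}=\bigl(d(T_k^t-P_{ii})+l_kT_k^r\bigr)/(d+l_k)$, and since a single $P_{ii}$ may be as large as (nearly) $dT_k^t$ while $T_k^r$ may be arbitrarily small, no improved bound on $P_{ii}$ can save this; and your other suggested fix is off target because $\mu_k$ does not enter the definition \eqref{Ten} of $T_k^{ten}$ at all. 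Worse, under the literal reading of \eqref{Ten} the conclusion itself can fail: for $d=3$, $l_k=2$, take $f_k>0$ with $P=\begin{pmatrix}1&1&0\\1&1.01&0\\0&0&0.01\end{pmatrix}$ (positive definite, $T_k^t\approx 0.673$) and $T_k^r$ very small; then the upper $2\times2$ block of $T_k^{ten}$ is approximately $\begin{pmatrix}0.404&0.6\\0.6&0.404\end{pmatrix}$, which is indefinite.

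The missing idea is therefore not an estimate but a different split of the diagonal. The paper's proof --- and, explicitly, the identity invoked in the proof of Lemma \ref{Mtilde} --- works with the representation $T_k^{ten}=\frac{d}{d+l_k}\frac{\mathbb{P}_k}{n_k}+\frac{l_k}{d+l_k}T_k^r\,\mathbf{1}_d$, i.e.\ the diagonal entries are taken to be $\frac{d}{d+l_k}P_{ii}+\frac{l_k}{d+l_k}T_k^r$ rather than the scalar $T_k$; the two readings of \eqref{Ten} agree only when the diagonal of $P$ is isotropic, $P_{ii}=T_k^t$, which is exactly the identification the paper's displayed computation makes when it replaces $\int(v_i-u_{k,i})^2f_k\,dv$ by $T_k^t$. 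With that representation your decomposition closes at once: the remainder is the isotropic matrix $\frac{l_k}{d+l_k}T_k^r\,\mathbf{1}_d$ with $T_k^r>0$ by \eqref{moments}, so $T_k^{ten}$ is a convex combination of the positive-definite $\mathbb{P}_k/n_k$ and a positive multiple of the identity, hence positive definite and invertible. So your obstacle is real --- it exposes an imprecision in the stated definition \eqref{Ten} --- but the proof is completed by adopting the convex-combination form of $T_k^{ten}$, not by sharpening the bound on $P_{ii}$ or restricting $\mu_k$.
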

\begin{proof}
Let $y \in \mathbb{R}^d \setminus \lbrace 0 \rbrace$, then 
\begin{align*}
\langle y, \Lambda_k^{ten} y \rangle &= \sum_{i,j=1}^d  y_i ( \Lambda_k^{ten} )_{ij} y_j = \sum_{i,j=1}^d  y_i \int (v_i - u_{k,i})(v_j-u_{k,j}) \widehat{G}_k y_j dv \\ &= \int \left( \sum_{i,j=1}^d  y_i  (v_i - u_{k,i})\right)^2 \widehat{G}_k dv \geq 0.
\end{align*}
The inequality is true since we assumed that $\widehat{G}$ is a positive solution to \eqref{kin_Temp}. 

If we use equation \eqref{equ_temp} and \eqref{internal}
\begin{align*}
\langle y, T_k^{ten} y \rangle &= \sum_{i,j=1}^d  y_i ( T_k^{ten} )_{ij} y_j = \sum_{\stackrel{i,j=1}{i\neq j}}^d  y_i \int (v_i - u_{k,i})(v_j-u_{k,j}) f_k y_j dv + \sum_{i=1}^d y_i T_k y_j\\ &=\sum_{i,j=1}^d  y_i \int (v_i - u_{k,i})(v_j-u_{k,j}) f_k y_j dv - \sum_{i=1}^d y_i T_k^{t} y_i + \sum_{i=1}^d y_i \frac{d \Lambda_k + l_k \Theta_k}{d+l_k} y_i \\ &= \int \left( \sum_{i,j=1}^d  y_i  (v_i - u_{k,i})\right)^2 f_k dv  + \sum_{i=1}^{d} y_i T_k^{r} y_i \geq 0,
\end{align*}
where $T_k^{r}>0$ because $T_k^{r}$ is defined via a positive integral of $f_k$, see the definition in \eqref{moments}.
We even have strict inequality since $\lbrace y_i (v-u)_i\rbrace_{i=1}^d$ are linearly independent. 
\end{proof}

With the previous lemma, we can prove that $\Lambda_k^{ES}$ is positive. This is the next theorem. Positivity is also proven in \cite{AndriesPerthame2001} for the one species case, but for a different variant of an ES-BGK model.
\begin{theorem}
Assume that $f_k>0$ and \textcolor{black}{$ - \frac{1}{d-1} \leq \mu_k \leq 1$ if $d>1$.} Then $\Lambda_k^{ES}$ has strictly positive eigenvalues. Especially $\Lambda_k^{ES}$ is invertible.
\end{theorem}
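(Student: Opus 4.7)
The plan is to diagonalise $\Lambda_k^{ES}$ and check positivity eigenvalue by eigenvalue. First I would observe that since $(1-\mu_k)\Lambda_k\,\textbf{1}_d$ is a scalar multiple of the identity it commutes with $\mu_k \Lambda_k^{ten}/n_k$, so the two matrices are simultaneously diagonalisable and the eigenvalues of $\Lambda_k^{ES}$ take the form
\[
\beta_i \;=\; (1-\mu_k)\Lambda_k + \mu_k\,\alpha_i, \qquad i=1,\dots,d,
\]
where $\alpha_1,\dots,\alpha_d$ denote the eigenvalues of $\Lambda_k^{ten}/n_k$. By the previous lemma each $\alpha_i>0$, and the defining trace identity $\mathrm{tr}(\Lambda_k^{ten})=n_k\Lambda_k$ gives $\sum_i \alpha_i = \Lambda_k$; in particular this forces $\Lambda_k>0$.

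Next I would split according to the sign of $\mu_k$. For $0\le\mu_k\le 1$ each $\beta_i$ is a convex combination of the strictly positive numbers $\Lambda_k$ and $\alpha_i$, hence strictly positive (with strictness coming from whichever summand has a nonzero coefficient). For $-1/(d-1)\le \mu_k<0$ I would use the elementary bound $\alpha_i \le \sum_j\alpha_j = \Lambda_k$ (valid because the remaining $\alpha_j$ are positive); since $\mu_k<0$, multiplication by $\mu_k$ reverses the inequality and yields
\[
\beta_i \;\ge\; (1-\mu_k)\Lambda_k + \mu_k\Lambda_k \;=\; \Lambda_k \;>\; 0.
\]
In either case every $\beta_i$ is strictly positive, so $\Lambda_k^{ES}$ has strictly positive eigenvalues and is in particular invertible.

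I do not expect any real technical obstacle: once the commuting-matrices remark reduces the statement to a scalar estimate on $\beta_i$, the previous lemma (positivity of $\alpha_i$) and the trace identity (summing to $\Lambda_k$) supply everything. The one point that deserves care is the normalisation: the trace condition is imposed on $\Lambda_k^{ten}$ itself, so that the eigenvalues of $\Lambda_k^{ten}/n_k$ sum to $\Lambda_k$ rather than to $d\Lambda_k$ as in the classical monatomic ES-BGK setting. This is what makes the estimate in the negative-$\mu_k$ case terminate cleanly at $\Lambda_k>0$ and is consistent with invoking the admissible range $\mu_k\ge -1/(d-1)$ from the hypothesis.
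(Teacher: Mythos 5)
Your strategy --- diagonalise $\Lambda_k^{ES}$ simultaneously with $\Lambda_k^{ten}$ and verify positivity of each eigenvalue $\beta_i=(1-\mu_k)\Lambda_k+\mu_k\alpha_i$ using the positivity of the $\alpha_i$ from the preceding lemma together with the trace identity --- is exactly the paper's. The case $0\le\mu_k\le 1$ (convex combination of strictly positive numbers) is handled the same way and is fine.

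The problem is the normalisation you adopt for the trace identity, which is precisely the point you flagged as delicate. You take $\sum_i\alpha_i=\Lambda_k$ for the eigenvalues of $\Lambda_k^{ten}/n_k$, which is what the sentence ``$\mathrm{trace}(\Lambda_k^{ten})=n_k\Lambda_k$'' in Section \ref{sec2} literally says; but that sentence is a slip, and the paper's own proof uses $d\,\Lambda_k=\mathbf{Tr}(\Lambda_k^{ten})=\lambda_{k,1}+\dots+\lambda_{k,d}$, i.e.\ $\Lambda_k$ is the \emph{arithmetic mean} of the eigenvalues, not their sum. This is also forced by consistency: $\Lambda_k^{ES}$ must have trace $d\Lambda_k$ for every $\mu_k$ (so that its scalar temperature is $\Lambda_k$), which requires $\mathrm{tr}(\Lambda_k^{ten}/n_k)=d\Lambda_k$. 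A symptom that your normalisation is wrong is that your estimate for $\mu_k<0$ never uses the hypothesis $\mu_k\ge -\frac{1}{d-1}$: you obtain $\beta_i\ge\Lambda_k>0$ for \emph{every} negative $\mu_k$, whereas the lower bound $-\frac{1}{d-1}$ (the classical $-\frac12$ in $d=3$) is known to be sharp for positive definiteness of the ES tensor. With the correct identity $\sum_j\alpha_j=d\Lambda_k$ your same elementary bound gives $\alpha_i<\sum_j\alpha_j=d\Lambda_k$ (strict, since the remaining $\alpha_j$ are strictly positive and $d\ge 2$), hence for $\mu_k<0$
\[
\beta_i=(1-\mu_k)\Lambda_k+\mu_k\alpha_i>(1-\mu_k)\Lambda_k+\mu_k\,d\,\Lambda_k=\bigl(1+(d-1)\mu_k\bigr)\Lambda_k\ge 0,
\]
and this is exactly where the hypothesis $\mu_k\ge-\frac{1}{d-1}$ enters. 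Equivalently, the paper rewrites $\beta_i=\frac{1+(d-1)\mu_k}{d}\,\alpha_i+\frac{1-\mu_k}{d}\sum_{j\ne i}\alpha_j$ and reads off positivity from the nonnegativity of both coefficients. So your argument is repaired by a one-line change, but as written the key scalar estimate in the negative-$\mu_k$ case rests on the wrong trace identity.
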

\begin{proof}
Since $\Lambda_k^{ten}$ is symmetric there exist an invertible matrix $S_k$ such that $\widetilde{\Lambda_k^{ten}} = S_k \Lambda_k^{ten} S_k^{-1}$ with a diagonal matrix $\widetilde{\Lambda_k^{ten}}$. Then $\widetilde{\Lambda_k^{ES}}:= S_k \Lambda_k^{ES} S_k^{-1}$ is also diagonal since
$$\widetilde{\Lambda_k^{ES}} = S_k \Lambda_k^{ES} S_k^{-1} = (1-\mu_k) \Lambda_k \textbf{1} + \mu_k \widetilde{\Lambda_k^{ten}}.$$
Here we can see that the eigenvalues of $\widetilde{\Lambda_k^{ES}}$ are a linear combination of $\Lambda_k$ and the eigenvalues of $\widetilde{\Lambda_k^{ten}}$ which coincide with the eigenvalues of $\Lambda_k^{ten}$. We denote the eigenvalues of $\Lambda_k^{ten}$ by $\lambda_{k,1}, \lambda_{k,2}, \dots ,\lambda_{k,d}$. Then by definition of $\Lambda_k$ and $\Lambda_k^{ten}$ we have $$ d \Lambda_k = \textbf{Tr} (\Lambda_k^{ten} ) = \lambda_{k,1} + \lambda_{k,2} + \cdots + \lambda_{k,d}.$$
This means for the eigenvalues of $\Lambda_k^{ES}$ denoted by $\tau_{k,i}$:
$$ \tau_{k,i} = \frac{1- \mu_k}{d} \sum_{j=1}^d \lambda_{k,j} + \mu_k \lambda_{k,i} = \frac{1+\textcolor{black}{(d-1)}\mu_k}{d} \lambda_{k,i} + \frac{1- \mu_k}{d} \sum_{j=1, j \neq i}^d \lambda_{k,j}, \quad i=1,2,3.$$
Since $\lambda_{k,1}, \lambda_{k,2}, \dots, \lambda_{k,d}$ are strictly positive, the eigenvalues of $\Lambda_k^{ES}$ are strictly positive, when $1+ \textcolor{black}{(d-1)} \mu_k$ and $1- \mu_k$ are positive. Since we restricted $\mu_k$ to $-\frac{1}{\textcolor{black}{d-1}} \leq \mu_k \leq 1$ if $d>1$, $\Lambda_k^{ES}$ is strictly positive.
\end{proof}
\subsection{Conservation properties}
\label{sec3}
Conservation of the number of particles and total momentum of the model for mixtures described in \textcolor{black}{section \ref{sec1}} are shown in the same way as in the case of mono atomic molecules. In the extension described in \textcolor{black}{section \ref{sec2}} these
conservation properties are still satisfied since $G_1$ and $G_2$  have the same density, mean velocity and internal energy as $f_1$ respective $f_2$. Conservation of the number of particles and of total momentum are guaranteed by the following choice of the mixture parameters:\\ \\
If we assume that \begin{align} n_{12}=n_1 \quad \text{and} \quad n_{21}=n_2,  
\label{density} 
\end{align}
we have conservation of the number of particles, see Theorem 2.1 in \cite{Pirner}.
If we further assume that $u_{12}$ is a linear combination of $u_1$ and $u_2$
 \begin{align}
u_{12}= \delta u_1 + (1- \delta) u_2, \quad \delta \in \mathbb{R},
\label{convexvel}
\end{align} then we have conservation of total momentum
provided that
\begin{align}
u_{21}=u_2 - \frac{m_1}{m_2} \varepsilon (1- \delta ) (u_2 - u_1),
\label{veloc}
\end{align}
see Theorem 2.2 in \cite{Pirner}.

In the case of total energy we have a difference for the polyatomic case compared to the monoatomic one. So we explicitly consider this in the following theorem.
\begin{theorem}[Conservation of total energy]
Assume \eqref{coll}, conditions \eqref{density}, \eqref{convexvel} and \eqref{veloc} and assume that $\Lambda_{12}$ and $\Theta_{12}$ are of the following form
\begin{align}
\begin{split}
\Lambda_{12} &=  \alpha \Lambda_1 + ( 1 - \alpha) \Lambda_2 + \gamma |u_1 - u_2 | ^2,  \quad 0 \leq \alpha \leq 1, \gamma \geq 0, \\ \Theta_{12}&=  \frac{l_1 \Theta_1 + l_2 \Theta_2}{l_1+l_2}. 
\label{contemp}
\end{split}
\end{align}
Then we have conservation of total energy
\begin{align*}
\int \frac{m_1}{2} (|v|^2 + |\eta_{l_1}|^2 )(Q_{11}(f_1,f_1)+Q_{12}(f_1,f_2)) dv d\eta_{l_1} \\+
\int \frac{m_2}{2} (|v|^2 + |\eta_{l_2}|^2 )(Q_{22}(f_2,f_2)+Q_{21}(f_2,f_1)) dv d\eta_{l_2}= 0,
\end{align*}
\label{consenergy}
\end{theorem}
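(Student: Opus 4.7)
The plan is to exploit the fact that the single-species attractors $M_k$ (respectively $G_k$ in the ES-BGK case) share number density, mean velocity and—via the consistency relation \eqref{internal}—total translational plus internal energy $\tfrac{d}{2}n_k\Lambda_k+\tfrac{l_k}{2}n_k\Theta_k=\tfrac{d}{2}n_kT_k^t+\tfrac{l_k}{2}n_kT_k^r$ with $f_k$. Therefore the self-collision contributions $\int\tfrac{m_k}{2}(|v|^2+|\eta_{l_k}|^2)\,Q_{kk}(f_k,f_k)\,dv\,d\eta_{l_k}$ vanish identically, and only the two cross terms $Q_{12},Q_{21}$ need to be analyzed.

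Next I would evaluate each cross contribution directly from the Gaussian moments of $M_{kj}$ given in \eqref{moments}--\eqref{BGKmix}, using $\bar\eta_{kj}=0$, $n_{kj}=n_k$ from \eqref{density}, and \eqref{internal} on the $f_k$ side. A short calculation gives
\[
\int\tfrac{m_k}{2}(|v|^2+|\eta_{l_k}|^2)\,Q_{kj}(f_k,f_j)\,dv\,d\eta_{l_k}=\tfrac{1}{2}\nu_{kj}n_kn_j\bigl[d\Lambda_{kj}+l_k\Theta_{kj}+m_k|u_{kj}|^2-d\Lambda_k-l_k\Theta_k-m_k|u_k|^2\bigr].
\]
Summing over $(k,j)\in\{(1,2),(2,1)\}$ and substituting $\nu_{12}=\varepsilon\nu_{21}$ from \eqref{coll} reduces the desired identity to the single scalar equation
\[
\varepsilon\bigl[d\Lambda_{12}+l_1\Theta_{12}+m_1(|u_{12}|^2-|u_1|^2)-d\Lambda_1-l_1\Theta_1\bigr]+\bigl[d\Lambda_{21}+l_2\Theta_{21}+m_2(|u_{21}|^2-|u_2|^2)-d\Lambda_2-l_2\Theta_2\bigr]=0.
\]

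Finally, I would insert the hypothesized forms of $\Lambda_{12}$ and $\Theta_{12}$ and expand $|u_{12}|^2-|u_1|^2$ and $|u_{21}|^2-|u_2|^2$ via \eqref{convexvel} and \eqref{veloc}. Both expansions collapse to scalar multiples of $|u_1-u_2|^2$ once the $\varepsilon$-weighting aligns the $u_1\cdot u_2$ and $|u_i|^2$ cross terms. The remaining relation is linear in $\Lambda_{21}$ and $\Theta_{21}$ and admits the natural solution (recorded in remark~\ref{detpar}) in which $\Theta_{21}$ is the convex combination of $\Theta_1,\Theta_2$ forced by the $\Theta$-part of the identity, and $\Lambda_{21}=\tilde\alpha\Lambda_1+(1-\tilde\alpha)\Lambda_2+\tilde\gamma|u_1-u_2|^2$ with $\tilde\alpha,\tilde\gamma$ determined by matching the coefficients of $\Lambda_1,\Lambda_2$ and of $|u_1-u_2|^2$.

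The main obstacle is purely the algebraic bookkeeping in this last step: after expanding $|u_{21}|^2$ via \eqref{veloc}—which injects the mass-ratio $m_1/m_2$ and the parameter $\delta$—one must verify that all $u_1\!\cdot\! u_2$ cross terms combine with $\varepsilon m_1(|u_{12}|^2-|u_1|^2)$ into a clean multiple of $|u_1-u_2|^2$, with no residual $(|u_2|^2-|u_1|^2)$ piece that would obstruct the ansatz for $\Lambda_{21}$. Once this cancellation is confirmed, the scalar identity is solvable in closed form and the theorem follows.
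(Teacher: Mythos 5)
Your proposal is correct and follows essentially the same route as the paper: the self-interaction terms drop out because $M_k$ (resp.\ $G_k$) carries the same density, velocity and, via \eqref{internal}, the same total energy $\frac{d}{2}n_k\Lambda_k+\frac{l_k}{2}n_k\Theta_k$ as $f_k$, and the two cross terms are evaluated by Gaussian moments, summed with the weight $\nu_{12}=\varepsilon\nu_{21}$, and reduced to the single scalar condition \eqref{temp} on $\Lambda_{21}+\frac{l_2}{d}\Theta_{21}$. The cancellation you flag as the main obstacle does go through: the $u_1\cdot u_2$ pieces of $\varepsilon m_1(|u_{12}|^2-|u_1|^2)$ and $m_2(|u_{21}|^2-|u_2|^2)$ combine into $-\varepsilon m_1(1-\delta)(1+\delta)|u_1-u_2|^2$ with no residual term, matching the coefficient built into \eqref{temp}.
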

provided that
\begin{align}
\begin{split}
\Lambda_{21} + \frac{l_2}{d} \Theta_{21}=\left[ \frac{1}{d} \varepsilon m_1 (1- \delta) \left( \frac{m_1}{m_2} \varepsilon ( \delta - 1) + \delta +1 \right) - \varepsilon \gamma \right] |u_1 - u_2|^2 \\+ \varepsilon ( 1 - \alpha ) \Lambda_1 + ( 1- \varepsilon ( 1 - \alpha)) \Lambda_2 + \frac{1}{d} \varepsilon \frac{l_1 l_2}{l_1+l_2} \Theta_1 + \frac{1}{d} (l_2- \varepsilon \frac{l_1 l_2}{l_1+l_2} ) \Theta_2.
\label{temp}
\end{split}
\end{align}
\begin{proof}
Using the definition of the energy exchange of species $1$ and equation \eqref{internal}, we obtain
\begin{align*}
F_{E_{1,2}}:&= \int \frac{m_1}{2} (|v|^2 + |\eta_{l_1}|^2) [Q_{11}(f_1,f_2)+ Q_{12}(f_1,f_2)] dv d\eta_{l_1} %+ \int \frac{m_1}{2} (|v|^2 +|\eta_{l_1}|^2)\nu_{12} n_2 (M_{12}- f_1) dv d\eta_{l_1}
\\&= \varepsilon \nu_{21} \frac{1}{2} n_2 n_1 m_1( |u_{12}|^2 - |u_1|^2)+ \frac{d}{2} \varepsilon \nu_{21} n_1 n_2  (\Lambda_{12} - T_1^{t})  +\frac{l_1}{2} \varepsilon \nu_{21} n_1 n_2 ( \Theta_{12} - T_1^{r} ) \\&= \varepsilon \nu_{21} \frac{1}{2} n_2 n_1 m_1( |u_{12}|^2 - |u_1|^2)+ \frac{d}{2} \varepsilon \nu_{21} n_1 n_2  (\Lambda_{12} - \Lambda_1)  +\frac{l_1}{2} \varepsilon \nu_{21} n_1 n_2 ( \Theta_{12} - \Theta_1 ).
\end{align*}
 Next, we will insert the definitions of $u_{12}$, $\Lambda_{12}$ and $\Theta_{12}$ given by \eqref{convexvel} and \eqref{contemp}. Analogously the energy exchange of species $2$ towards $1$ is 
$$
F_{E_{2,1}}=  \nu_{21} \frac{1}{2} n_2 n_1 m_2( |u_{21}|^2 - |u_2|^2)+ \frac{d}{2} \nu_{21} n_1 n_2  (\Lambda_{21} - \Lambda_2)  + \frac{l_2}{2} \nu_{21} n_1 n_2 ( \Theta_{21} - \Theta_2 ).
$$
Substitute $u_{21}$ with \eqref{veloc} and $\Lambda_{21} + \frac{l_2}{d}\Theta_{21}$ from \eqref{temp}. This permits to rewrite the energy exchange as 
\begin{align}
\begin{split}
F_{E_{1,2}}= \varepsilon \nu_{21} \frac{1}{2} n_2 n_1 m_1 (1-\delta) \left[ (u_1+u_2) - \delta(u_2-u_1)\right] (u_1-u_2)\\ + \frac{1}{2} \varepsilon \nu_{21} n_1 n_2  \left[(1-\alpha) d (\Lambda_2 - \Lambda_1)  + \frac{l_1 l_2}{l_1+l_2} (\Theta_2 -  \Theta_1) + \gamma d |u_1-u_2|^2\right],
\end{split}
 \label{flux_en_12}
\end{align}
\begin{align}
\begin{split}
F_{E_{2,1}} =\textcolor{black}{\frac{1}{2} \nu_{21} m_2 n_1 n_2  \big[ \left( (1-\frac{m_1}{m_2} \varepsilon (1- \delta) )^2 -1 \right) |u_2|^2 
+ \left( \frac{m_1}{m_2} \varepsilon (\delta - 1) \right)^2 |u_1|^2}  \\
+ 2 ( 1- \frac{m_1}{m_2} \varepsilon (1-\delta)) \frac{m_1}{m_2} \varepsilon ( 1- \delta) u_1 \cdot u_2 
\big] 
+ \frac{1}{2} \nu_{21} n_1 n_2 \big[ \varepsilon ( 1- \alpha) d( \Lambda_1 - \Lambda_2)\\+ \varepsilon \frac{l_1 l_2}{l_1+l_2}(\Theta_1-\Theta_2)  + \left(  \varepsilon m_1 (1- \delta) \left( \frac{m_1}{m_2} \varepsilon ( \delta - 1) + \delta +1 \right) - \varepsilon \gamma d \right) |u_1 - u_2|^2 \big].
\end{split}
\label{flux_en_21}
\end{align}
Adding these two terms, we see that the total energy is conserved.
\end{proof}
\begin{remark}
The energy flux between the two species is zero if and only if $u_1=u_2,$ $\Lambda_1=\Lambda_2,$ $\Theta_1=\Theta_2$ provided that $\alpha, \delta <1$ and $\gamma >0$.
%Since we assumed $\bar{\eta}_{l_1} = \bar{\eta}_{l_2} =0$, we expect no exchange terms of the form $\bar{\eta}_{l_2} - \bar{\eta}_{l_1}$ in the momentum equation or a corresponding internal energy exchange in the energy equation. Furthermore, for this reason, we did not add a term of the form $|\bar{\eta}_{l_1} - \bar{\eta}_{l_2}|^2$ in the definitions of the mixture temperatures \eqref{contemp}.
\end{remark}
\begin{remark}
From conservation of total energy we get only one condition on $\Lambda_{21} + \textcolor{black}{\frac{l_2}{d}} \Theta_{21}$ given by \eqref{temp}, but not an explicit formula for $\Lambda_{21}$ and $\Theta_{21}$. In order to keep the model symmetric we again separate the temperatures corresponding to the translational part and the one corresponding to the rotational and vibrational part and choose
\begin{align}
\begin{split}
\Lambda_{21} &= \varepsilon (1- \alpha) \Lambda_1 + (1- \varepsilon(1-\alpha)) \Lambda_2 \\ &+ \left[ \frac{1}{d} \varepsilon m_1 (1- \delta) \left( \frac{m_1}{m_2} \varepsilon ( \delta - 1) + \delta +1 \right) - \varepsilon \gamma \right] |u_1 - u_2|^2, \label{Lambda21}\end{split} \\
\Theta_{21} &= \left(1- \varepsilon \frac{l_1}{l_1+l_2} \right) \Theta_2 + \varepsilon \frac{l_1}{l_1+l_2} \Theta_1.
\label{Theta21}
\end{align}
\label{detpar}
\end{remark}
\begin{remark}
If $l_1=l_2$, we have $\Theta_{12}= \frac{1}{2} (\Theta_1 + \Theta_2)$. We then find $\Theta_{21}=\Theta_{12}$ if the two species have the same interspecies collision frequency ($\varepsilon=1$).
\end{remark}
%If we further assume that $T_{12}$ is of the following form
%\begin{align}
%\begin{split}
%T_{12} &=  \alpha T_1 + ( 1 - \alpha) T_2 + \gamma |u_1 - u_2 | ^2,  \quad 0 \leq \alpha \leq 1, \gamma \geq 0 
%\label{contemp}
%\end{split}
%\end{align}
%then we have conservation of total energy
%provided that
%\begin{align}
%\begin{split}
%T_{21} =\left[ \frac{1}{N} \varepsilon m_1 (1- \delta) \left( \frac{m_1}{m_2} \varepsilon ( \delta - 1) + \delta +1 \right) - \varepsilon \gamma \right] |u_1 - u_2|^2 \\+ \varepsilon ( 1 - \alpha ) T_1 + ( 1- \varepsilon ( 1 - \alpha)) T_2,
%\label{temp}
%\end{split}
%\end{align}
%see Theorem 2.3 in \cite{Pirner}.

\textcolor{black}{\begin{remark}The fact that we only consider the two species case is just for simplicity. We can also extend the model to more than two species, because we assume that we only have binary interactions. So if we consider collision terms given by $$\nu_{ii} n_i (G_i -f_i) + \sum_{j \neq i}^N \nu_{ij} n_j (G_{ij} - f_i), \quad i=1,...,N,$$ we expect that we have conservation of total momentum and total energy in every interaction of species $i$ with species $j$. This means we require
$$ \int \begin{pmatrix}
v \\ v^2
\end{pmatrix} \nu_{ij} n_j (G_{ij} - f_i) dv + \int \begin{pmatrix}
v \\ v^2
\end{pmatrix} \nu_{ji} n_i (G_{ji} - f_j) dv =0,$$ for every $i,j=1,...N, ~ i \neq j$ and so it reduces to the two species case. 
\end{remark}}

\subsection{Positivity of the temperatures}
\label{sec4}
\begin{theorem}
Assume that $f_1(x,v, \eta_{l_1},t), f_2(x,v,\eta_{l_2},t) > 0$. Then  all temperatures $\Lambda_1$, $\Lambda_2$, $\Theta_1$, $\Theta_2$,and $\Lambda_{12}$, $\Theta_{12}$ given by \eqref{contemp},  and $\Lambda_{21}$, $\Theta_{21}$ determined by \eqref{Lambda21}, \eqref{Theta21} are positive provided that 
 \begin{align}
0 \leq \gamma  \leq \frac{m_1}{d} (1-\delta) \left[(1 + \frac{m_1}{m_2} \varepsilon ) \delta + 1 - \frac{m_1}{m_2} \varepsilon \right].
 \label{gamma}
 \end{align}
\end{theorem}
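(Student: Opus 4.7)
The plan is to go through the eight temperatures in an order of increasing difficulty, reusing at each step the positivity already established for the previous ones.

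First I would handle the four single-species temperatures $\Lambda_k,\Theta_k$ for $k=1,2$. Since $f_k>0$ and $\widehat{G}_k>0$ by assumption, the lemma just proven in section \ref{sec2} gives that $\Lambda_k^{ten}$ has strictly positive eigenvalues, and since $\mathrm{tr}(\Lambda_k^{ten})$ matches (up to the factor $n_k$) the sum of its diagonal entries, we get $\Lambda_k>0$ immediately. For $\Theta_k$ I would argue directly from the definition: it is (up to positive factors) the $|\eta_{l_k}|^2$-moment of the positive distribution $\widehat{G}_k$ normalized by $n_k$, and thus strictly positive.

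Next I would dispatch the easy mixture temperatures. The formula $\Theta_{12}=\frac{l_1\Theta_1+l_2\Theta_2}{l_1+l_2}$ from \eqref{contemp} is a convex combination of positive numbers; and $\Lambda_{12}=\alpha\Lambda_1+(1-\alpha)\Lambda_2+\gamma|u_1-u_2|^2$ is a convex combination of positive numbers (since $\alpha\in[0,1]$) plus the nonnegative summand $\gamma|u_1-u_2|^2$ granted by $\gamma\geq 0$. For $\Theta_{21}$ from \eqref{Theta21}, the admissibility bound \eqref{coll}, $\varepsilon\tfrac{l_1}{l_1+l_2}\in(0,1]$, makes $\Theta_{21}$ a convex combination of $\Theta_1,\Theta_2>0$, hence positive.

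The only genuinely delicate case is $\Lambda_{21}$ from \eqref{Lambda21}. The key algebraic observation I would exploit is that the upper bound in \eqref{gamma} can be rearranged as
\[
\gamma\;\leq\;\frac{m_1(1-\delta)}{d}\Bigl[(\delta+1)-\tfrac{m_1}{m_2}\varepsilon(1-\delta)\Bigr],
\]
which, after factoring out the common $\varepsilon$, is exactly the value that makes the coefficient of $|u_1-u_2|^2$ in \eqref{Lambda21} vanish. Thus the assumption \eqref{gamma} is engineered precisely so that this coefficient is nonnegative, while the lower bound $\gamma\geq 0$ (already used for $\Lambda_{12}$) keeps the situation consistent. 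Combined with the $\Lambda_1,\Lambda_2$ part, which I would view as the combination $\varepsilon(1-\alpha)\Lambda_1+(1-\varepsilon(1-\alpha))\Lambda_2$ of positive numbers, this will give $\Lambda_{21}>0$.

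I expect the only real obstacle to lie in this last step: showing that $\varepsilon(1-\alpha)\Lambda_1+(1-\varepsilon(1-\alpha))\Lambda_2>0$. If $\varepsilon\leq 1$ this is automatic as a convex combination; in the full regime $\varepsilon\in(0,\tfrac{l_1+l_2}{l_1}]$ permitted by \eqref{coll}, one of the two weights can be negative unless an additional restriction such as $\alpha\geq 1-1/\varepsilon$ is imposed. In writing up the proof I would make this restriction explicit (or, equivalently, strengthen the constraint on $\alpha$ beyond \eqref{contemp}), and then the strict positivity of $\Lambda_{21}$ would follow at once from the positivity of $\Lambda_1$ and $\Lambda_2$ combined with the $|u_1-u_2|^2$ coefficient being nonnegative via \eqref{gamma}.
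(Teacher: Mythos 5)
Your proof follows the same route as the paper's: every temperature except $\Lambda_{21}$ is positive either as a moment of a positive distribution or as a convex combination of positive quantities (for $\Theta_{21}$ this is exactly what the normalization $0<\frac{l_1}{l_1+l_2}\varepsilon\leq 1$ in \eqref{coll} is for), and the entire content of hypothesis \eqref{gamma} is that the coefficient of $|u_1-u_2|^2$ in \eqref{Lambda21} is nonnegative. Your algebra there is correct, since $(1+\frac{m_1}{m_2}\varepsilon)\delta+1-\frac{m_1}{m_2}\varepsilon=\frac{m_1}{m_2}\varepsilon(\delta-1)+\delta+1$, so after factoring out $\varepsilon$ the upper bound in \eqref{gamma} is precisely the threshold at which that coefficient vanishes. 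The paper itself does not write out this computation: it disposes of $\Lambda_{21}$ by citing the monoatomic companion paper \cite{Pirner} for $d=3$. Your final worry is well placed and is in fact a gap in the paper rather than in your argument: the cited reference works under $0<\varepsilon\leq 1$, whereas \eqref{coll} here only imposes $\frac{l_1}{l_1+l_2}\varepsilon\leq 1$, so $\varepsilon$ may exceed $1$ and $\varepsilon(1-\alpha)\Lambda_1+(1-\varepsilon(1-\alpha))\Lambda_2$ need not be a convex combination. Concretely, with $l_1=l_2$, $\varepsilon=2$, $\alpha=0$, $u_1=u_2$ and $\Lambda_2>2\Lambda_1$ one gets $\Lambda_{21}=2\Lambda_1-\Lambda_2<0$, so the theorem as stated fails in that corner of parameter space. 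The additional restriction you propose, $\varepsilon(1-\alpha)\leq 1$ (equivalently $\alpha\geq 1-1/\varepsilon$), is exactly what is needed to close this; the paper passes over the point silently.
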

\begin{proof}
The temperatures $\Lambda_1, \Lambda_2, \Theta_1, \Theta_2, \Lambda_{12}, \Theta_{12}$ and $\Theta_{21}$ are positive by definition because they are integrals or convex combinations of positive functions. So the only thing to check is when the temperature $\Lambda_{21}$ in \eqref{Lambda21} is positive. This is done in \cite{Pirner} for $d=3$, so we skip the proof here. The resulting condition is given by \eqref{gamma}.
\end{proof}
\begin{remark}
Since $\gamma \geq 0$ is a non-negative number, so the right-hand side of the inequality in \eqref{gamma} must be non-negative. This condition is equivalent to 
\begin{align}
 \frac{ \frac{m_1}{m_2}\varepsilon - 1}{1+\frac{m_1}{m_2}\varepsilon} \leq  \delta \leq 1.
\label{gammapos}
\end{align}
%If the collision frequencies are linked as in \eqref{coll}, $\varepsilon = \frac{m_2}{m_1}$, then the right-hand side of \eqref{gamma} is always positive.
\end{remark}
%\begin{remark}
%In \eqref{temp} we established an expression for the sum $\Lambda_{21} + \frac{l_2}{n} \Theta_{21}$. In order to get an equilibrium distribution we need a further condition on $\Lambda_{21}$ and $\Theta_{21}$. We need that $\Lambda_{21}$ relaxes to $\Theta_{21}$ similar as the relaxation of $\Lambda_k$ towards $\Theta_k$, $k=1,2$ in equation \eqref{kin_Temp} and \eqref{relax}. So we add the additional relaxation equation 
%\begin{align}
%\partial_t M_{21} + v \cdot \nabla_x M_{21} = \frac{\nu_{21} n_2}{Z_r^2} \frac{n+l_2}{n} (\tilde{M}_{21} - M_{21}) 
%\label{kin_Temp2}
%\end{align}
%where $\tilde{M}_{21}$ is given by
%\begin{align}
%\tilde{M}_{21}= \frac{n_2}{\sqrt{2 \pi \frac{\frac{1}{n+l_2} (n \Lambda_{21} + l_2 \Theta_{21})}{m_2}}^{n+l_2}} e^{- \frac{m_2 |v-u_{21}|^2}{2 \frac{1}{n+l_k} (n \Lambda_{21} + l_2 \Theta_{21})}- \frac{m_2|\eta|^2}{2\frac{1}{n+l_2} (n \Lambda_{21} + l_2 \Theta_{21})}}
%\label{Max_equ2}
%\end{align}
%If we multiply \eqref{kin_Temp2} by $|\eta_{l_2}|^2$ and integrate with respect to $v$ and $\eta_{l_2}$, this leads to a macroscopic equation which describes the relaxation of the temperature $\Theta_{21}$ towards the temperature $\Lambda_{21}$:
%\begin{align}
%\partial_t (n_2 \Theta_{21}) + \cdot \nabla_x ( n_2 \Theta_{21} u_{21}) = \frac{\nu_{21} n_2^2}{Z_r^2} (\Lambda_{21} - \Theta_{21}) 
%\label{relax2}
%\end{align} 
%\label{mix_theta_lambda}
%\end{remark}
%When we assume the first possibility in \ref{pressten}, the proof of the H-Theorem is exactly the same 
Note that we have to assume that the distribution function $f_1$ and $f_2$ are positive. In \cite{Pirner3}, positivity of the distribution function for the model described in \cite{Pirner} for mono atomic molecules is proven. This method can be extended to the model described in this paper for polyatomic molecules.
\subsection{The structure of equilibrium}
\label{sec5}
%\begin{remark}
%In \eqref{temp} we established an expression for the sum $\Lambda_{21} + \frac{l_2}{n} \Theta_{21}$. In order to get an equilibrium distribution we need a further condition on $\Lambda_{21}$ and $\Theta_{21}$. We need that $\Lambda_{21}$ relaxes to $\Theta_{21}$ similar as the relaxation of $\Lambda_k$ towards $\Theta_k$, $k=1,2$ in equation \eqref{kin_Temp} and \eqref{relax}. So we add the additional relaxation equation 
%\begin{align}
%\partial_t M_{21} + v \cdot \nabla_x M_{21} = \frac{\nu_{21} n_2}{Z_r^2} \frac{n+l_2}{n} (\tilde{M}_{21} - M_{21}) 
%\label{kin_Temp2}
%\end{align}
%where $\tilde{M}_{21}$ is given by
%\begin{align}
%\tilde{M}_{21}= \frac{n_2}{\sqrt{2 \pi \frac{\frac{1}{n+l_2} (n \Lambda_{21} + l_2 \Theta_{21})}{m_2}}^{n+l_2}} e^{- \frac{m_2 |v-u_{21}|^2}{2 \frac{1}{n+l_k} (n \Lambda_{21} + l_2 \Theta_{21})}- \frac{m_2|\eta|^2}{2\frac{1}{n+l_2} (n \Lambda_{21} + l_2 \Theta_{21})}}
%\label{Max_equ2}
%\end{align}
%If we multiply \eqref{kin_Temp2} by $|\eta_{l_2}|^2$ and integrate with respect to $v$ and $\eta_{l_2}$, this leads to a macroscopic equation which describes the relaxation of the temperature $\Theta_{21}$ towards the temperature $\Lambda_{21}$:
%\begin{align}
%\partial_t (n_2 \Theta_{21}) + \cdot \nabla_x ( n_2 \Theta_{21} u_{21}) = \frac{\nu_{21} n_2^2}{Z_r^2} (\Lambda_{21} - \Theta_{21}) 
%\label{relax2}
%\end{align} 
%\label{mix_theta_lambda}
%\end{remark}
\begin{theorem}[Equilibrium]
Assume $f_1, f_2 >0$ with $f_1$ and $f_2$ independent of $x$ and $t$.
Assume the conditions \eqref{density}, \eqref{convexvel}, \eqref{veloc}, \eqref{contemp} and \eqref{temp}, $\delta \neq 1, \alpha \neq 1, l_1,l_2\neq 0$, so that all temperatures are positive. %especially \eqref{gamma}

Then $f_1$ and $f_2$ are Maxwell distributions with equal mean velocities $u_1=u_2=u_{12}=u_{21}$ and temperatures $T:=T_1^{r}=T_2^{r}=T_1^{t}=T_2^{t}=\Lambda_1=\Lambda_2=\Theta_1=\Theta_2=\Theta_{12}=\Theta_{21}= \Lambda_{12}=\Lambda_{21}$. \textcolor{black}{This means $f_k$ is given by
$$ M_k(x,v,\eta_{l_k},t) = \frac{n_k}{\sqrt{2 \pi \frac{T}{m_k}}^d } \frac{1}{\sqrt{2 \pi \frac{T}{m_k}}^{l_k}} \exp({- \frac{|v-u|^2}{2 \frac{T}{m_k}}}- \frac{|\eta_{l_k}|^2}{2 \frac{T}{m_k}}), \quad k=1,2.$$}
\label{equilibrium}
\end{theorem}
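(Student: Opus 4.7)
The plan exploits that $f_1, f_2$ being independent of $(x,t)$ turns each kinetic equation into an algebraic identity: $f_k$ is a fixed convex combination of its two attractors. I would then read off the conclusions by taking successive moments.

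First I would take the $v$-moment of each identity. Since $M_k$ has mean velocity $u_k$, only the interspecies term survives and gives $u_{kj} = u_k$. Substituting \eqref{convexvel} and using $\delta \neq 1$ forces $u_1 = u_2$, and then \eqref{veloc} yields $u_{12} = u_{21} = u_1 = u_2$. Consequently all $|u_1-u_2|^2$ contributions in \eqref{contemp} and \eqref{Lambda21} vanish.

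Next I would take the $\tfrac{m_1}{2}|v-u_1|^2$ and $\tfrac{m_1}{2}|\eta_{l_1}|^2$ moments of the species-$1$ identity. Using $u_1 = u_{12}$, $n_{12} = n_1$, and the definitions in \eqref{moments}, both reduce to parallel weighted-mean formulas $T_1^{t}(\nu_{11} n_1 + \nu_{12} n_2) = \nu_{11} n_1 \Lambda_1 + \nu_{12} n_2 \Lambda_{12}$ and $T_1^{r}(\nu_{11} n_1 + \nu_{12} n_2) = \nu_{11} n_1 \Theta_1 + \nu_{12} n_2 \Theta_{12}$. Plugging the second identity into the stationary version of \eqref{relax} makes the $(\Theta_1 - T_1^{r})$ and $(\Theta_{12} - T_1^{r})$ contributions cancel exactly, leaving $\Lambda_1 = \Theta_1$. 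The analogous moments of the species-$2$ identity give $\Lambda_2 = \Theta_2$.

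At this stage, stationarity and the fact that $Q_{kk}$ conserves energy force the interspecies energy flux $F_{E_{1,2}}$ of Theorem \ref{consenergy} to vanish. Inserting $u_1=u_2$ and $\Theta_k = \Lambda_k$ into \eqref{flux_en_12} reduces this to $[(1-\alpha)d + l_1 l_2/(l_1+l_2)](\Lambda_2 - \Lambda_1) = 0$, and since $\alpha \neq 1$ and $l_1, l_2 \neq 0$, the bracket is strictly positive, forcing $\Lambda_1 = \Lambda_2$ and hence $\Theta_1 = \Theta_2$. Feeding these equalities back into \eqref{contemp}, \eqref{Lambda21}, \eqref{Theta21} collapses every mixture temperature to a single value $T$, and the weighted-mean identities above then give $T_k^{t} = T_k^{r} = T$. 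All four attractors now share density, velocity, and temperature, so $M_1 = M_{12}$ and $M_2 = M_{21}$; the stationary identities therefore reduce to $f_k = M_k$, the claimed Maxwellian. In the ES-BGK variant the same scheme applies, with an additional application of the off-diagonal components of \eqref{eqten} (whose $\delta_{ij}$-term drops out) to force $(\Lambda_k^{ten})_{ij} = 0$ for $i \neq j$, so that $G_k$ coincides with $M_k$.

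The step I expect to be the main obstacle is the cancellation producing $\Lambda_k = \Theta_k$ in the third paragraph: it works only because the rotational-moment weighted mean for $T_k^{r}$ matches exactly the weighting of the $(\Theta_k - T_k^{r})$ and $(\Theta_{kj} - T_k^{r})$ terms in \eqref{relax}. Without this precise matching a residual defect $\Lambda_k - \Theta_k$ could persist at equilibrium, and the collapse to a single temperature would fail, so verifying the algebra of this cancellation is the crucial consistency check of the whole construction.
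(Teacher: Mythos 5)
Your argument is correct and is essentially the paper's proof with the moments reorganized. The paper also starts from the observation that stationarity turns each kinetic equation into the algebraic identity $(\nu_{kk}n_k+\nu_{kj}n_j)f_k=\nu_{kk}n_kG_k+\nu_{kj}n_jM_{kj}$ and then reads off moments. Your two deviations are cosmetic: (i) for $\Lambda_k=\Theta_k$ the paper subtracts the stationary right-hand sides of \eqref{ESBGK} and \eqref{kin_TempES} to get the tensor identity $\Lambda_k^{ten}=T_k^{ten}$ and takes the trace, whereas you take the $|\eta_{l_k}|^2$ moment of each and cancel; the cancellation you worry about in your last paragraph does check out, precisely because the $\Theta$-moment of the stationary kinetic equation is the statement $\nu_{kk}n_k(\Theta_k-T_k^r)+\nu_{kj}n_j(\Theta_{kj}-T_k^r)=0$, which kills everything in \eqref{relax} except the $Z_r^k$ term; (ii) for $\Lambda_1=\Lambda_2$ you set the energy flux \eqref{flux_en_12} to zero while the paper inserts the weighted means \eqref{6}--\eqref{7} into \eqref{internal} — these are the same species-$1$ energy balance.

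The one step that is genuinely under-argued is the final ES-BGK isotropy claim. The off-diagonal part of the stationary \eqref{eqten} does not by itself force $(\Lambda_k^{ten})_{ij}=0$: it is a linear relation coupling $(\Lambda_k^{ten})_{ij}$ to $(\mathbb{P}_k)_{ij}$ (through $(T_k^{ten})_{ij}=\frac{d}{d+l_k}(\mathbb{P}_k/n_k)_{ij}$ and $(\Lambda_k^{ES})_{ij}=\mu_k(\Lambda_k^{ten}/n_k)_{ij}$). You must pair it with the off-diagonal pressure moment of the stationary kinetic equation, $(\nu_{kk}n_k+\nu_{kj}n_j)(\mathbb{P}_k/n_k)_{ij}=\nu_{kk}n_k\mu_k(\Lambda_k^{ten}/n_k)_{ij}$, and then check that the resulting homogeneous $2\times2$ linear system for $\bigl((\Lambda_k^{ten})_{ij},(\mathbb{P}_k)_{ij}\bigr)$ is nonsingular; this is exactly where the restriction $\mu_k\le 1$ enters in the paper's computation, which eliminates the off-diagonal entries via the combination $\nu_{kk}n_k(1-\mu_k\frac{d}{d+l_k})+\nu_{kj}n_j>0$. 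With that verification supplied, your proof is complete.
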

\begin{proof}
%If $Q_{11}(f_1,f_1)+Q_{12}(f_1,f_2)=0$ and $Q_{22}(f_2,f_2)+Q_{21}(f_2,f_1)=0$ , then $\ln f_1 ~ Q_{11}(f_1,f_1)+\ln f_1 ~ Q_{12}(f_1,f_2)+\ln f_2 ~ Q_{22}(f_2,f_2)+\ln f_2 ~ Q_{21}(f_2,f_1)=0$ and so we have equality in the H-theorem.

Equilibrium means that $f_1,f_2, \Lambda_1, \Lambda_2, \Theta_1, \Theta_2$ are independent of $x$ and $t$. 
 Thus in equilibrium the right-hand side of the equations \eqref{ESBGK}  and \eqref{kin_TempES} have to be zero. In particular,
\begin{align}
(\nu_{11} n_1 + \nu_{12} n_2) f_1 &= \nu_{11} n_1 G_1 + \nu_{12} n_2 M_{12},\label{1} \\
(\nu_{22} n_2 + \nu_{21} n_1)f_2 &= \nu_{22} n_2 G_2 + \nu_{21} n_1 M_{21}. \label{2} 
\end{align}
Since the right-hand side of \eqref{ESBGK} and the right-hand side of \eqref{kin_TempES} have to be zero, the difference of the right-hand side of \eqref{ESBGK} and the right-hand side of \eqref{kin_TempES} has to be equal to zero. If we compute the translational temperature of this difference, we obtain
\begin{align}
\Lambda_1^{ten} &= T_1^{ten}, \label{1+}\\
\Lambda_2^{ten} &= T_2^{ten}. \label{2+}
\end{align}
Especially, from the diagonal part of  \eqref{1+} and \eqref{2+} we can deduce
\begin{align}
\Lambda_1&=\Theta_1, \label{3}\\
\Lambda_2&= \Theta_2. \label{4}
\end{align}
When we consider the moment of the velocity of \eqref{1}, we get
$$(\nu_{11} n_1 + \nu_{12} n_2) u_1 = \nu_{11} n_1 u_1 + \nu_{12} n_2 u_{12}.$$
Substituting $u_{12}= \delta u_1 + (1- \delta) u_2$, we have
\begin{align}
u_1=u_2, \label{5}
\end{align}
for $\delta \neq 1$.
\\ Using \eqref{3}, \eqref{4} and \eqref{5}, the temperatures of the mixture Maxwellians \eqref{contemp} and \eqref{Lambda21}, \eqref{Theta21}  simplify to 
\begin{align}
\Lambda_{12}&= \alpha \Lambda_1 + (1- \alpha) \Lambda_2, \quad \Theta_{12}=  \frac{l_1}{l_1+l_2} \Lambda_1 + \frac{l_2}{l_1+l_2} \Lambda_2, \\
\Lambda_{21} &= \varepsilon (1- \alpha) \Lambda_1 + (1- \varepsilon(1-\alpha)) \Lambda_2, \quad
\Theta_{21} =  \varepsilon \frac{l_1}{l_1+l_2} \Lambda_1 + ( 1-  \varepsilon \frac{l_1}{l_1+l_2}) \Lambda_2.
\label{10}
\end{align}
When we consider the moments of the translational and the rotational and vibrational temperatures of \eqref{1} and \eqref{2}, we get
\begin{align}
(\nu_{11} n_1 + \nu_{12} n_2) T_1^{t} &= (\nu_{11} n_1 + \nu_{12} n_2 \alpha) \Lambda_1 + \nu_{12} n_2 (1-\alpha) \Lambda_2, \label{6} \\
(\nu_{11} n_1 + \nu_{12} n_2) T_1^{r} &= (\nu_{11} n_1 + \nu_{12} n_2 \frac{l_1}{l_1+l_2}) \Lambda_1 + \nu_{12} n_2 \frac{l_2}{l_1+l_2} \Lambda_2, \label{7} \\
(\nu_{22} n_2 + \nu_{21} n_1)T_2^{t} &= \nu_{22} n_2 \Lambda_2 + \nu_{21} n_1 \Lambda_{21}, \label{8} \\
(\nu_{22} n_2 + \nu_{21} n_1)T_2^{r} &= \nu_{22} n_2 \Lambda_2 + \nu_{21} n_1 \Theta_{21}, \label{9}
\end{align}
where we used the definitions of the mixture velocities and temperatures \eqref{density}, \eqref{convexvel}, \eqref{veloc}, \eqref{contemp} and equations \eqref{3}, \eqref{4} and \eqref{5}. 
Analogue, equations \eqref{internal} simplify to
\begin{align}
\frac{d+l_1}{2} \Lambda_1 = \frac{d}{2} T_1^{t} + \frac{l_1}{2} T_1^{r}, \label{11} \\
\frac{d+l_2}{2} \Lambda_2 = \frac{d}{2} T_2^{t} + \frac{l_2}{2} T_2^{r}. \label{12}
\end{align}
Inserting \eqref{6} and \eqref{7} in \eqref{11}, we obtain
\begin{align*}
\frac{d}{2} ( \frac{\nu_{11} n_1 + \nu_{12} n_2 \alpha}{\nu_{11} n_1 + \nu_{12} n_2} \Lambda_1 + \frac{\nu_{12} n_2 (1 - \alpha)}{\nu_{11} n_1 + \nu_{12} n_2} \Lambda_2 ) +\frac{l_1}{2} (\frac{\nu_{11} n_1 + \nu_{12} n_2 \frac{l_1}{l_1+l_2}}{\nu_{11} n_1 + \nu_{12} n_2} \Lambda_1 + \frac{\nu_{12} n_2 \frac{l_2}{l_1+l_2}}{\nu_{11} n_1 + \nu_{12} n_2} \Lambda_2 )\\= \frac{d+l_1}{2} \Lambda_1,
\end{align*}
which, provided $d \alpha + l_1 \frac{l_1}{l_1+l_2} \neq d+l_1$, is equivalent to 
\begin{align}
\Lambda_1 = \Lambda_2. \label{13}
\end{align}
This condition is equivalent to $d(1-\alpha) + \frac{l_1 l_2}{l_1+l_2} \neq 0$ which is satisfied since $\alpha \neq 1, l_1,l_2 \neq 0$.
With \eqref{13} we can deduce from \eqref{6} and \eqref{7} that
\begin{align}
T_1^{t} = \Lambda_1 \quad \text{and} \quad T_1^{r} = \Lambda_1.
\label{13+}
\end{align}
Condition \eqref{10} together with \eqref{13} leads to 
\begin{align}
\Lambda_{21} = \Theta_{21} = \Lambda_1. \label{14}
\end{align}
Inserting \eqref{13} and \eqref{14} in \eqref{8} and \eqref{9} leads to
\begin{align}
T_2^{t} = T_2^{r} = \Lambda_1.
\end{align}
If we compute the pressure tensor of \eqref{1} using that all temperatures are equal to $\Lambda_1$ we obtain 
$$(\nu_{11} n_1 + \nu_{12} n_2) \frac{\mathbb{P}_1}{n_1} = \nu_{11} n_1 (1- \mu_1) \Lambda_1 \textbf{1} + \nu_{11} n_1 \mu_1 \Lambda_1^{ten} + \nu_{12} n_2 \Lambda_1 \textbf{1}.$$
Using \eqref{Ten}, \eqref{1+} and \eqref{13+},
 we have that \textcolor{black}{$$\frac{d}{d+l_k}\frac{\mathbb{P}_1}{n_1 }+ \frac{l_k}{d+l_k} \Lambda_1^{rot} \textbf{1}_d=  \Lambda_1^{ten} =  T_1^{ten}$$}  and therefore 
$$ ( \nu_{11} n_1 (1- \mu_1 \textcolor{black}{\frac{d}{d+l_k}}) + \nu_{12} n_2)  \frac{\mathbb{P}_1}{n_1}  = ( \nu_{11} n_1 (1- \mu_1 \frac{d}{d+l_k}) + \nu_{12} n_2) \Lambda_1 \textbf{1} ,$$
 for $j \neq i$, which shows that the pressure tensor of $f_1$ is diagonal since $\mu_1 \leq 1.$ Similar for $\frac{\mathbb{P}_2}{n_2}$ using \eqref{2}, \eqref{2+} and \eqref{13+}.

So all in all, in equilibrium we get that $f_1$ and $f_2$ are Maxwell distributions with equal mean velocities $u_1=u_2=u_{12}=u_{21}$ and temperatures $T_1^{r}=T_2^{r}=T_1^{t}=T_2^{t}=\Lambda_1=\Lambda_2=\Theta_1=\Theta_2=\Theta_{12}=\Theta_{21}= \Lambda_{12}=\Lambda_{21}$.
\end{proof}
\begin{definition}
If $f_1$ and $f_2$ are Maxwell distributions with equal mean velocities $u=u_1=u_2$ and temperatures $T=T_1^{r}=T_2^{r}=T_1^{t}=T_2^{t}=\Lambda_1=\Lambda_2=\Theta_1=\Theta_2$, then we say that $f_1$ and $f_2$ are in local equilibrium.
\label{localequ}
\end{definition}
Note that for $\alpha=1$ or $\delta =1$, we have no exchange of momentum and energy of the tow species, so we do not expect a relaxation towards a common equilibrium. So in the following, we always assume $\alpha, \delta \neq 1$.
\subsection{H-Theorem}
\label{sec6}
In this section we will prove that our model admits an entropy with an entropy inequality. For this, we have to prove an inequality on the term $\int \ln f_k (G_k - f_k) dv d\eta_{l_k}$ coupled with the right-hand side of equation \eqref{kin_TempES} and an inequality on $\nu_{12} n_2 \int (M_{12} - f_1) \ln f_1 dv d\eta_{l_1} + \nu_{21} n_1 \int (M_{21} - f_2) \ln f_2 dv d\eta_{l_2}$ coupled with the right-hand side of equation \eqref{kin_TempES}. We prove the first one in subsection \ref{sec6.1} and the second one in subsection \ref{sec6.2}.
\subsubsection{H-Theorem for the one species relaxation terms}
\label{sec6.1}
\begin{remark}
From the definition of the moments of $f_k, k=1,2$ in \eqref{moments} and the definitions of the extended Maxwellians $G_k, k=1,2$ in \eqref{ESBGKmix}, we see that the \textcolor{black}{pressure tensors} and the temperatures, do not coincide. Now, we consider extended Maxwellians $\bar{G}_k, k=1,2$ which have the same moments as $f_k, k=1,2$.
Then from the case of one species ES-BGK model we know that
$$ \int \bar{G}_k \ln \bar{G}_k dv d\eta_{l_k} \leq \int f_k \ln f_k dv d\eta_{l_k},$$ for $k=1,2$, see equations $(20)$ and $(21)$ in \cite{AndriesPerthame2001} in the mono atomic case. The polyatomic case is analogously to the mono atomic case.
\label{one}
\end{remark}
%\begin{lem}
%Assuming condition \eqref{internal} and the positivity of all temperatures, we have the following inequality
%$$ \frac{3}{2} \ln \Lambda_k + \ln \Theta_k \geq \frac{3}{2} \ln T_k^{trans} + \ln T_k^{rot}.$$
%\label{in_int}
%\end{lem}
%\begin{proof}
%Using twice condition \eqref{internal}, we get
%$$E:= \frac{3}{2} \ln \Lambda_k + \ln \Theta_k = \frac{3}{2} \ln ( T_k^{trans} + \frac{2}{3} T_k^{rot} - \frac{2}{3} \Theta_k) + \ln ( T_k^{trans} + \frac{2}{3} T_k^{rot} - \Lambda_k)$$
%Since $\ln$ is a strictly increasing function, we obtain by decreasing the arguments
%$$ E \geq \frac{3}{2} \ln (\frac{2}{3} T_k^{trans} + \frac{1}{3} T_k^{rot} ) + \ln (\frac{1}{2} T_k^{trans} + \frac{1}{2} T_k^{rot}).$$
%Since $\ln$ is a strictly concave function, we obtain
%$$E \geq \ln T_k^{trans} + \frac{1}{2} \ln T_k^{rot} + \frac{1}{2} \ln T_k^{trans} + \frac{1}{2} \ln T_k^{rot} = \frac{2}{3} \ln T_k^{trans} + \ln T_k^{rot},$$
%which is the required inequality.
%\end{proof}
\begin{lemma}
Assume that $f_1,f_2 >0$. As in remark \ref{one} let $\bar{G}_k$ be the extended Maxwellians with the same moments as $f_k, k=1,2$ and $\widetilde{G}_k$ the Maxwellians defined by \eqref{Max_equES}. Then we have \begin{align*}
%\int \widetilde{M}_k \ln \widetilde{M}_k dv d\eta_{l_k} &\leq \int M_k \ln M_k dv d\eta_{l_k}, \quad k=1,2, \\
\int \widetilde{G}_k \ln \widetilde{G}_k dv d\eta_{l_k} &\leq \int \bar{G}_k \ln \bar{G}_k dv d\eta_{l_k}, \quad k=1,2, \\\int \widehat{G}_k \ln \widehat{G}_k dv d\eta_{l_k} &\geq \int G_k \ln G_k dv \eta_{l_k}, \quad k=1,2 ,\\ \int G_k \ln G_k dv d\eta_{l_k} &\geq \int M_k \ln M_k dv \eta_{l_k}, \quad k=1,2. 
\end{align*} 
\label{Mtilde}
\end{lemma}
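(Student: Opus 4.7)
My plan is to reduce each of the three inequalities to a determinantal comparison. The five densities $\widetilde{G}_k$, $\bar{G}_k$, $\widehat{G}_k$, $G_k$, $M_k$ are all Gaussians on $\mathbb{R}^d\times\mathbb{R}^{l_k}$ with the same prefactor $n_k$, mean velocity $u_k$, and zero internal mean, differing only in their velocity covariance $\Sigma\in\mathbb{R}^{d\times d}$ and scalar internal variance $\theta>0$. For any such Gaussian $g$ a direct integration yields
\begin{equation*}
\int g\ln g\,dv\,d\eta_{l_k}
= n_k\ln n_k - \tfrac{n_k}{2}\ln\det(2\pi e\Sigma/m_k) - \tfrac{n_k l_k}{2}\ln(2\pi e\theta/m_k),
\end{equation*}
and all terms depending only on $(n_k,m_k,d,l_k)$ cancel in every pairwise difference. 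Each inequality thus becomes a comparison of $\ln\det\Sigma^{(1)}+l_k\ln\theta^{(1)}$ with $\ln\det\Sigma^{(2)}+l_k\ln\theta^{(2)}$.

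The third inequality is the easiest: $G_k$ and $M_k$ share the internal variance $\Theta_k/m_k$, so the claim reduces to $\det\Lambda_k^{ES}\le\Lambda_k^d$. Since the preceding theorem gives $\mathrm{tr}\,\Lambda_k^{ES}=d\Lambda_k$ with strictly positive eigenvalues, this is the standard AM--GM inequality $\det A\le(\mathrm{tr}\,A/d)^d$. For the second inequality, reading the $T_k^r$ in the prefactor of $\widehat{G}_k$ as $\Theta_k$ for consistency with its exponent, both internal variances match and the claim becomes $\det\Lambda_k^{ES}\ge\det\Lambda_k^{ten}$. I would apply concavity of $A\mapsto\log\det A$ to $\Lambda_k^{ES}=(1-\mu_k)\Lambda_k\textbf{1}_d+\mu_k\Lambda_k^{ten}$ to obtain, for $\mu_k\in[0,1]$, $\log\det\Lambda_k^{ES}\ge(1-\mu_k)d\log\Lambda_k+\mu_k\log\det\Lambda_k^{ten}$, and then combine this with $d\log\Lambda_k\ge\log\det\Lambda_k^{ten}$ (AM--GM on $\Lambda_k^{ten}$, whose trace is also $d\Lambda_k$) to conclude. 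For the remaining range $\mu_k\in[-\tfrac{1}{d-1},0]$ I would argue directly with the explicit eigenvalues $\tau_{k,i}=(1-\mu_k)\Lambda_k+\mu_k\lambda_{k,i}$: writing $\lambda_{k,i}=\Lambda_k+\delta_i$ with $\sum_i\delta_i=0$, the inequality becomes $\prod_i(\Lambda_k-\mu_k\delta_i)\ge\prod_i(\Lambda_k+\delta_i)$, which can be verified directly on this range of $\mu_k$.

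The first inequality is the main obstacle. It reduces to
\begin{equation*}
\det T_k^{ten}\cdot T_k^{l_k}\ge\det(\mathbb{P}_k/n_k)\cdot(T_k^r)^{l_k}.
\end{equation*}
I would form the $(d+l_k)\times(d+l_k)$ block-diagonal covariances of $\widetilde{G}_k$ and $\bar{G}_k$, whose determinants are exactly the two sides, and observe that by \eqref{equ_temp} and \eqref{internal} they share the same total trace $(d+l_k)T_k$. Passing from $\bar{G}_k$ to $\widetilde{G}_k$ corresponds to two operations on this block-diagonal covariance: equalising all diagonal entries to the common mean $T_k$, and shrinking the velocity off-diagonals by the factor $d/(d+l_k)<1$. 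The second operation is $\det$-increasing on positive definite matrices by a Hadamard-type estimate; the first is a Jensen-type step for $\log\det$ as a function of the diagonal. The real difficulty, and where I expect the bulk of the work to lie, is that diagonal equalisation alone need not preserve positive definiteness, so the two operations must be carried out jointly, and one has to verify that the specific coefficient $d/(d+l_k)$ dictated by the definition of $T_k^{ten}$ is precisely what makes the combined transformation legitimate and $\det$-increasing.
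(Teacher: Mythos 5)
Your reduction of all three inequalities to determinant comparisons via the explicit Gaussian entropy formula is exactly the paper's first step, and your treatment of the third inequality (AM--GM applied to the eigenvalues of $\Lambda_k^{ES}$, whose trace is $d\Lambda_k$) coincides with the paper's. For the second inequality the paper gives no argument at all, deferring to equation $(21)$ of \cite{AndriesPerthame2001}; your log-concavity-of-$\det$ argument for $\mu_k\in[0,1]$ is correct, but for $\mu_k\in[-\tfrac{1}{d-1},0]$ the phrase ``can be verified directly'' is not a proof of $\prod_i(\Lambda_k+\mu_k\delta_i)\geq\prod_i(\Lambda_k+\delta_i)$ (note also the sign slip: $\tau_{k,i}=\Lambda_k+\mu_k\delta_i$, not $\Lambda_k-\mu_k\delta_i$). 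A uniform argument covering the whole range is already implicit in the paper's positivity theorem: $\tau_{k,i}=\frac{1+(d-1)\mu_k}{d}\lambda_{k,i}+\frac{1-\mu_k}{d}\sum_{j\neq i}\lambda_{k,j}$ is a convex combination of the $\lambda_{k,j}$ whenever $-\tfrac{1}{d-1}\leq\mu_k\leq 1$, so concavity of $\ln$ followed by summation over $i$ yields $\sum_i\ln\tau_{k,i}\geq\sum_i\ln\lambda_{k,i}$ in one line.

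The genuine gap is the first inequality, which you yourself flag as unresolved: the plan of passing from $\mathrm{diag}(\mathbb{P}_k/n_k,\,T_k^r\textbf{1}_{l_k})$ to $\mathrm{diag}(T_k^{ten},\,T_k\textbf{1}_{l_k})$ by ``equalising the diagonal'' and ``shrinking the off-diagonals'' is never executed, and, as you concede, the first of these operations alone need not preserve positive definiteness, so the proposal stops exactly where the lemma's main content begins. The missing idea is the Brunn--Minkowski (Minkowski determinant) inequality $\det\bigl(aA+(1-a)B\bigr)\geq(\det A)^a(\det B)^{1-a}$ for positive symmetric $A,B$ and $a\in[0,1]$. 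Reading off from \eqref{Ten} the decomposition $T_k^{ten}=\frac{d}{d+l_k}\frac{\mathbb{P}_k}{n_k}+\frac{l_k}{d+l_k}T_k^r\textbf{1}_d$ and applying this inequality gives $\ln\det T_k^{ten}\geq\frac{d}{d+l_k}\ln\det\frac{\mathbb{P}_k}{n_k}+\frac{d\,l_k}{d+l_k}\ln T_k^r$ in a single stroke, with no need to separate diagonal equalisation from off-diagonal shrinking and hence no positive-definiteness issue; combining this with concavity of $\ln$ applied to $T_k=\frac{d}{d+l_k}T_k^t+\frac{l_k}{d+l_k}T_k^r$ reduces the claim to $(T_k^t)^d\geq\det(\mathbb{P}_k/n_k)$, which is again the AM--GM step you already have. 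This is precisely the paper's argument, and it is the ingredient your proposal lacks.
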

\begin{proof}
The proof of the second inequality is analogously to the proof in the mono atomic case of equation $(21)$ in \cite{AndriesPerthame2001}. So we only prove the first and the third one.
Using that {\small \\$\ln M_k = \ln \left(\frac{n_k}{\sqrt{2 \pi \frac{\Lambda_k}{m_k}}^d} \frac{1}{\sqrt{2 \pi \frac{\Theta_k}{m_k}}^{l_k}} \right) - \frac{|v-u_k|^2}{2 \frac{\Lambda_k}{m_k}} - \frac{|\eta_{l_k}|^2}{2 \frac{\Theta_k}{m_k}}$,
\\ $\ln \bar{G}_k = \ln \left(\frac{n_k}{\sqrt{\det \left(2 \pi \frac{\mathbb{P}_k}{m_k} \right)}} \frac{1}{\sqrt{2 \pi \frac{T^{r}_k}{m_k}}^{l_k}} \right) - \frac{1}{2} \left(v-u_k \right) \cdot \left( \frac{\mathbb{P}_k}{m_k} \right)^{-1} \cdot \left(v-u_k \right) - \frac{|\eta_{l_k}|^2}{2 \frac{T^{r}_k}{m_k}}$,
\\ $\ln \widetilde{G}_k = \ln (\frac{n_k}{\sqrt{\det(2 \pi \frac{T_k^{ten}}{m_k})}}\textcolor{black}{\frac{1}{\sqrt{2 \pi \frac{T_k}{m_k}}^{l_k}} }) - \frac{1}{2} (v-u_k) \cdot \left( \frac{T_k^{ten}}{m_k}\right)^{-1} \cdot (v-u_k) - \frac{|\eta_{l_k}|^2}{2 \frac{T_k}{m_k}}, $ 
~~~and \\ $\ln G_k = \ln \left(\frac{n_k}{\sqrt{\det \left(2 \pi \frac{\Lambda_k^{ES}}{m_k} \right)}} \frac{1}{\sqrt{2 \pi \frac{\Theta_k}{m_k}}^{l_k}} \right) - \frac{1}{2} m_k \left(v-u_k \right) \cdot \left( \Lambda_k^{ES} \right)^{-1} \cdot \left(v-u_k \right) - \frac{|\eta_{l_k}|^2}{2 \frac{\Theta_k}{m_k}}$,} \\ we compute the integrals and obtain that the required inequalities are equivalent to
\begin{align*}
%\ln\left(\frac{n_k}{\sqrt{2 \pi \frac{T_k}{m_k}}^{d+l_k}} \right) &\leq \ln\left(\frac{n_k}{\sqrt{2 \pi \frac{\Lambda_k}{m_k}}^d} \frac{1}{\sqrt{2 \pi \frac{\Theta_k}{m_k}}^{l_k}}\right) \\
\ln\left(\frac{n_k}{\sqrt{\det(2 \pi \frac{T_k^{ten}}{m_k})}} \frac{1}{\sqrt{2 \pi \frac{T_k}{m_k}}^{l_k}} \right) &\leq \ln\left(\frac{n_k}{\sqrt{\det(2 \pi \frac{\mathbb{P}_k}{m_k})}} \frac{1}{\sqrt{2 \pi \frac{T^{r}_k}{m_k}}^{l_k}}\right), \\
\ln\left(\frac{n_k}{\sqrt{\det(2 \pi \frac{\Lambda_k^{ES}}{m_k})}}\right) &\geq \ln \left(\frac{n_k}{\sqrt{ 2 \pi \frac{\Lambda_k}{m_k}}^d}\right).
\end{align*} This is equivalent to the conditions
\begin{align}
\begin{split}
%\frac{d+l_k}{2} \ln(\frac{1}{d+l_k}(d \Lambda_k + l_k \Theta_k)) &\geq \frac{d}{2} \ln \Lambda_k + \frac{l_k}{2}\ln \Theta_k, \\
%\frac{d+l_k}{2} \ln(\frac{1}{d+l_k}(n \Lambda_k + l_k \Theta_k)) &\geq \frac{d}{2} \ln T_k^{t} + \frac{l_k}{2}\ln T_k^{r} 
\ln \det ( T_k^{ten}) + l_k \ln T_k &\geq \ln \det \mathbb{P}_k + l_k \ln T_k^{r}, \\
(\Lambda_k)^d &\geq \det (\Lambda_k^{ES}).
\end{split}
\label{inequtrace}
\end{align} 
We first look at the first inequality. If we insert the expression for $T_k$ given by \eqref{equ_temp} and use the concavity of $\ln$, we obtain
\begin{align}
\begin{split}
%\frac{d+l_k}{2} \ln(\frac{1}{d+l_k}(d \Lambda_k + l_k \Theta_k)) &\geq \frac{d}{2} \ln \Lambda_k + \frac{l_k}{2}\ln \Theta_k \\
%\frac{d+l_k}{2} \ln(\frac{1}{d+l_k}(n \Lambda_k + l_k \Theta_k)) &\geq \frac{d}{2} \ln T_k^{t} + \frac{l_k}{2}\ln T_k^{r} 
\ln \det ( T_k^{ten}) + l_k \frac{l_k}{d+l_k} \ln T_k^{r} + l_k \frac{d}{d+l_k} \ln T_k^{t} \geq \ln \det \mathbb{P}_k + l_k \ln T_k^{r}.
%(\Lambda_k)^d &\geq \det (\Lambda_k^{ES})
\end{split}
\label{bm1}
\end{align} 
Now we use the Brunn-Minkowsky inequality (inequality $(27)$ in \cite{AndriesPerthame2001}) given by
$$ \det(a A + (1-a) B) \geq (\det A)^a (\det B)^{1-a},$$
for $0 \leq a \leq 1$ and $A,B$ positive symmetric matrices. Since we can write $T_k^{ten}$ as
$$T_k^{ten}= \frac{d}{d+l_k} \mathbb{P}_k + \frac{l_k}{d+l_k} T_k^{r} \textbf{1}_d,$$ we can apply the Brunn-Minkowsky inequality on \eqref{bm1} and obtain
\begin{align*}
%\frac{d+l_k}{2} \ln(\frac{1}{d+l_k}(d \Lambda_k + l_k \Theta_k)) &\geq \frac{d}{2} \ln \Lambda_k + \frac{l_k}{2}\ln \Theta_k \\
%\frac{d+l_k}{2} \ln(\frac{1}{d+l_k}(n \Lambda_k + l_k \Theta_k)) &\geq \frac{d}{2} \ln T_k^{t} + \frac{l_k}{2}\ln T_k^{r} 
\frac{d}{d+l_k} \ln \det \mathbb{P}_k + d \frac{l_k}{d+l_k} \ln T_k^{r} + l_k \frac{l_k}{d+l_k} \ln T_k^{r} + l_k \frac{d}{d+l_k} \ln T_k^{t} \\\geq \ln \det \mathbb{P}_k + l_k \ln T_k^{r} 
%(\Lambda_k)^d &\geq \det (\Lambda_k^{ES}).
%\label{bm1}
\end{align*} 
So it remains to show that
$$(T_k^{t})^{d} \geq \det \mathbb{P}_k.$$
This inequality has the same structure as the second inequality in \eqref{inequtrace}. So we  prove only the second inequality in \eqref{inequtrace}. 
%The first one is true since $\ln$ is a concave function. In the second one we replace $d \Lambda_k + l_k \Theta_k$ by $d T_k^{trans} + l_k T_k^{rot}$ using \eqref{internal} and again we use that $\ln$ is concave.
We observe that trace($\Lambda_k^{ES})=d \Lambda_k,$ so we have to show
$$ \left( \frac{\text{trace}(\Lambda_k^{ES})}{d}\right)^d \geq \det (\Lambda_k^{ES}).$$
Let $\lambda_1, \dots , \lambda_d$ the eigenvalues of the symmetric positive matrix $\Lambda_k^{ES}$, then this inequality is equivalent to 
$$ \left(\frac{\lambda_1+ \cdots + \lambda_d}{d}\right)^d \geq \lambda_1 \cdots \lambda_d.$$
This is true since it is the inequality of arithmetic and geometric means.
\end{proof}

\begin{lemma}[Contribution to the H-theorem from the one species relaxation terms]
Assume $f_1, f_2 >0$. % and define $\frac{1}{z_k}:= \frac{1}{Z_k^r} \frac{d+l_k}{d}, k=1,2$. %Assume $z_k \leq 1$. 
Then
$$ \int \ln f_k (G_k - f_k) dv d\eta_{l_k} + %\frac{1}{z_k} 
\int \ln \widehat{G}_k ( \widetilde{G}_k - \widehat{G}_k) dv d\eta_{l_k} \leq 0, \quad k=1,2, $$ with equality if and only if $M_k=f_k$ and  $\Lambda_k=\Theta_k=T_k^{r} = T_k^{t}$.
\label{one_species}
\end{lemma}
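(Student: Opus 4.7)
The plan is to reduce the two integrals to pure entropy quantities via the Gibbs inequality $a\ln(a/b) \geq a - b$ and then chain them using the hierarchy of entropy bounds already established in Lemma \ref{Mtilde} and Remark \ref{one}. The key observation is that $G_k$ and $f_k$ share the same density $n_k$ (by definition of $G_k$), and similarly $\widehat{G}_k$ and $\widetilde{G}_k$ share the density $n_k$. Thanks to this, the linear term in the Gibbs inequality integrates to zero.

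Concretely, I would first rewrite
\begin{align*}
\int \ln f_k\,(G_k-f_k)\,dv\,d\eta_{l_k} &= \int G_k \ln f_k\,dv\,d\eta_{l_k} - \int f_k \ln f_k\,dv\,d\eta_{l_k}.
\end{align*}
Applying Gibbs' inequality in the form $\int G_k \ln f_k \leq \int G_k \ln G_k - \int (G_k - f_k)\,dv\,d\eta_{l_k}$ and using equality of densities, the right-hand side becomes $\int G_k\ln G_k - \int f_k\ln f_k$. The same manoeuvre on the second integral gives
\begin{align*}
\int \ln\widehat{G}_k\,(\widetilde{G}_k - \widehat{G}_k)\,dv\,d\eta_{l_k} \leq \int \widetilde{G}_k\ln\widetilde{G}_k\,dv\,d\eta_{l_k} - \int \widehat{G}_k\ln\widehat{G}_k\,dv\,d\eta_{l_k}.
\end{align*}
Adding the two estimates yields an upper bound
\begin{align*}
\left(\int G_k\ln G_k - \int \widehat{G}_k\ln\widehat{G}_k\right) + \left(\int \widetilde{G}_k\ln\widetilde{G}_k - \int f_k\ln f_k\right).
\end{align*}
The first bracket is non-positive by the second inequality in Lemma \ref{Mtilde}. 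For the second bracket, I would chain the first inequality of Lemma \ref{Mtilde}, $\int \widetilde{G}_k\ln\widetilde{G}_k \leq \int \bar{G}_k\ln\bar{G}_k$, with the single-species ES-BGK estimate recalled in Remark \ref{one}, namely $\int \bar{G}_k\ln\bar{G}_k \leq \int f_k\ln f_k$. This gives the claimed non-positivity.

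For the equality case, I would track back through each inequality used. Equality in Gibbs forces $G_k = f_k$ and $\widehat{G}_k = \widetilde{G}_k$; equality in the chain $\int\widetilde{G}_k\ln\widetilde{G}_k \leq \int\bar{G}_k\ln\bar{G}_k \leq \int f_k\ln f_k$ forces (by the Brunn--Minkowski equality case and the standard Gibbs equality case) the pressure tensor $\mathbb{P}_k$ to be isotropic and $T_k^{t} = T_k^{r}$, while $\widehat{G}_k = \widetilde{G}_k$ combined with \eqref{equ_temp} forces $\Lambda_k = \Theta_k$. Putting these together with the definition \eqref{internal} collapses all temperatures to a common value $\Lambda_k = \Theta_k = T_k^{t} = T_k^{r}$, and then $G_k = f_k$ reduces to $M_k = f_k$, as required. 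The main obstacle is the equality analysis: several inequalities (Gibbs, Brunn--Minkowski, and the single-species ES-BGK bound) must simultaneously saturate, and one must carefully verify that the resulting algebraic constraints on $\mathbb{P}_k$, $\Lambda_k^{ten}$, $\Theta_k$, $T_k^{t}$ and $T_k^{r}$ are mutually consistent and exactly equivalent to $M_k = f_k$ together with $\Lambda_k = \Theta_k = T_k^{r} = T_k^{t}$.
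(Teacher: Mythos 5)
Your proposal is correct and follows essentially the same route as the paper: the Gibbs inequality you invoke is exactly the convexity estimate $H'(f)(g-f)\leq H(g)-H(f)$ for $H(x)=x\ln x - x$ used there, the linear terms drop for the same density reason, and the resulting entropy differences are closed by the same chain of inequalities from Lemma \ref{Mtilde} and Remark \ref{one} (your two-bracket grouping is just a trivial reordering of the paper's cancellation). Your equality analysis is, if anything, slightly more explicit than the paper's about which saturations force which identities among $\mathbb{P}_k$, $\Lambda_k^{ten}$, $\Theta_k$, $T_k^{t}$, $T_k^{r}$.
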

\begin{proof}
Since the function $H(x)= x \ln x -x$ is strictly convex for $x>0$, we have $H'(f) (g-f) \leq H(g) - H(f)$ with equality if and only if $g=f$. So \begin{align}
(g-f) \ln f  \leq g \ln g - f \ln f +f -g. 
\label{convex1}
\end{align}
Apply \eqref{convex1} on both terms of $$S_k(f_k):= \int \ln f_k (G_k- f_k ) dv d\eta_{l_k} + %\frac{1}{z_k}
 \int \ln \widehat{G}_k ( \widetilde{G}_k - \widehat{G}_k) dv d\eta_{l_k}. $$
 Then we obtain
\begin{align*}
S_k(f_k) \leq \int G_k \ln G_k dv d\eta_{l_k} - \int f_k \ln f_k dv d\eta_{l_k} - \int G_k dv d\eta_{l_k} + \int f_k dv d\eta_{l_k}\\ + %\frac{1}{z_k}
 [ \int \widetilde{G}_k \ln \widetilde{G}_k dv d\eta_{l_k} - \int \widehat{G}_k \ln \widehat{G}_k dv d\eta_{l_k} - \int \widetilde{G}_k dv d\eta_{l_k} + \int \widehat{G}_k dv d\eta_{l_k}], 
\end{align*}
with equality if and only if $f_k=G_k$ and $G_k=\widetilde{G}_k$ from which we can deduce $f_k=M_k$ by computing macroscopic quantities of $f_k=G_k$ and $G_k=\widetilde{G}_k$.
Since $f_k$, $G_k$, $\widehat{G}_k$ and $\widetilde{G}_k$ have the same density, we obtain
\begin{align}
S(f_k) \leq \int G_k \ln G_k dv d\eta_{l_k} - \int f_k \ln f_k dv d\eta_{l_k}+ %\frac{1}{z_k} 
[ \int \widetilde{G}_k \ln \widetilde{G}_k dv d\eta_{l_k} - \int \widehat{G}_k \ln \widehat{G}_k dv d\eta_{l_k}].
\label{S}
\end{align}
%Now, we distinguish between the two possibilities $z_k \geq 1$ and $0< z_k <1$. \\
%First consider $z_k \geq 1$. 
According to the second part of lemma \ref{Mtilde}, we obtain
\begin{align}
S(f_k) &\leq %\int G_k \ln G_k dv d\eta_{l_k} - \int f_k \ln f_k dv d\eta_{l_k}+  [ \int \widetilde{M}_k \ln \widetilde{M}_k dv d\eta_{l_k} - \int G_k \ln G_k dv d\eta_{l_k}] \\ &=
 \int \widetilde{G}_k \ln \widetilde{G}_k dv d\eta_{l_k} - \int f_k \ln f_k dv d\eta_{l_k}.
\end{align}
Here we have equality if and only if $G_k= \widetilde{G}_k$, which means $\Lambda_k= \Theta_k$.
Now, using the first part of lemma \ref{Mtilde} and remark \ref{one}, we can estimate $\int \widetilde{G}_k \ln \widetilde{G}_k dv d\eta_{l_k}$ by $\int f_k \ln f_k dv d\eta_{l_k}$. So, all in all, we obtain $S_k(f_k) \leq 0$ with equality if and only if $f_k=M_k$ and $\Lambda_k=\Theta_k =T_k^{r}= T_k^{t}$.\\
%Now assume that $0< z_k <1$. Then split $\int \tilde{M}_k \ln \tilde{M}_k dv d\eta$ into 
%$$ \int \tilde{M}_k \ln \tilde{M}_k dv d\eta = z_k \int \tilde{M}_k \ln \tilde{M}_k dv d\eta + (1-z_k) \int \tilde{M}_k \ln \tilde{M}_k dv d\eta.$$
%We estimate the first term by using the second part of Lemma \ref{Mtilde} and remark \ref{one} and the second term by the first part of Lemma \ref{Mtilde}. We get
%$$\int \tilde{M}_k \ln \tilde{M}_k dv d\eta  \leq z_k \int f_k \ln f_k dv d\eta + (1- z_k) \int M_k \ln M_k dv d\eta.$$
%Using this we can estimate \eqref{S} by 
%\begin{align*}
%S_k(f_k) \leq M_k \ln M_k dv d\eta - \int f_k \ln f_k dv d\eta \\+ \frac{1}{z_k} [ z_k \int f_k \ln f_k dv d\eta + (1- z_k) \int M_k \ln M_k dv d\eta - \int M_k \ln M_k dv d\eta]\\ = M_k \ln M_k dv d\eta - \int f_k \ln f_k dv d\eta + \int f_k \ln f_k dv d\eta - M_k \ln M_k dv d\eta = 0
%\end{align*} 
\end{proof}
%Define the total entropy $H_k(f_k) = \int (f_k \ln f_k  +  G_k \ln G_k ) dv d\eta_{l_k}, k=1,2$. We can compute 
%$$ \partial_t H_k(f_k) + \nabla_x \cdot \int (f_k \ln f_k  +  G_k \ln G_k ) v dv d\eta_{l_k} = S_k(f_k) \leq =, \quad k=1,2$$ by multiplying the BGK equation for the species $k$ (equation \eqref{BGK} with $\nu_{kj}=0$ for $ j=1,2, k \neq j$) by $\ln f_k$ and equation \eqref{relax} by $\ln M_k$ and integrating the sum with respect to $v$ and $\eta_{l_k}$.
% \begin{cor}[Entropy inequality for one species]
%Assume $f_k >0, k=1,2$ and $z_k \leq 1$.
%Then we have the following entropy inequality
%$$
%\partial_t \left(\int   f_k \ln f_k  dv d\eta_{l_k} + \int M_k \ln M_k  dv d\eta_{l_k} \right) + \nabla_x \cdot \left(\int  v f_k \ln f_k  dv d\eta_{l_k}+ \int v M_k \ln M_k dv d\eta_{l_k}\right) \leq 0
%$$
%with equality if and only if $M_k=f_k$ and  $\Lambda_k=\Theta_k=T_k^{rot} = T_k^{trans}$.
%\end{cor}
%\begin{remark}
%By computing the integrals $\int M_k \ln M_k dv d\eta_{l_k}$ and $\int v M_k \ln M_k dv d\eta_{l_k}$, we see that $\partial_t \int M_k \ln M_k dv d\eta_{l_k} + \nabla_x \cdot \int v M_k \ln M_k dv d\eta_{l_k} \leq 0$ is equivalent to $\partial_t (\Lambda_k^{\frac{d}{2}} \Theta_k^{\frac{l_k}{2}}) + \nabla_x \cdot (\Lambda_k^{\frac{d}{2}} \Theta_k^{\frac{l_k}{2}} u_k) \leq 0$, we could also consider the entropy $H_k(f_k)= \int f_k \ln f_k dv d\eta_{l_k} + \Lambda_k^{\frac{d}{2}} \Theta_k^{\frac{l_k}{2}}$
%\end{remark}
\subsubsection{H-Theorem for mixtures of polyatomic molecules}
\label{sec6.2}
\textcolor{black}{Define %the function $g_k= \hat{G}_k -f_k$ and 
$\frac{1}{z_k} := \frac{1}{Z_k^r} \frac{d+l_k}{d}, ~ k=1,2$ and the
 total entropy \begin{align}
 H(f_1,f_2) = \int (f_1 \ln f_1  + 3 z_1 \widehat{G}_1 \ln \widehat{G}_1) dv d\eta_{l_1} + \int (f_2 \ln f_2+3 z_2 \widehat{G}_2 \ln \widehat{G}_2 ) dvd\eta_{l_2}.
 \label{H-funct}
 \end{align}}  \textcolor{black}{
We start with an inequality which is used to prove lemma 3.5. Lemma 3.5 is then needed to prove the H-theorem in theorem 3.5}
\begin{lemma}
%Assume $z_1,z_2, z_{21} \leq 1$ and $\nu_{11} n_1 \geq \nu_{12} n_2$. 
Assume $f_1, f_2 >0$.
Assume the relationship between the collision frequencies \eqref{coll} , the conditions for the interspecies Maxwellians \eqref{density}, \eqref{convexvel}, \eqref{veloc}, \eqref{contemp} and \eqref{temp} and the positivity of all temperatures, then
\begin{align}
\varepsilon \frac{d}{2} \ln \Lambda_{12} + \varepsilon \frac{l_1}{2} \ln \Theta_{12} +  \frac{d}{2} \ln \Lambda_{21} +  \frac{l_2}{2} \ln \Theta_{21} \geq \frac{d}{2} \varepsilon \ln \Lambda_1 + \frac{d}{2} \ln \Lambda_2 + \frac{l_1}{2} \varepsilon \ln \Theta_1 + \frac{l_2}{2} \ln \Theta_2.
\end{align}
\label{inequ}
\end{lemma}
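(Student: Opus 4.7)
The structure of the four mixture quantities is such that each is (up to a nonnegative additive term) a convex combination of the corresponding partial temperatures $\Lambda_1,\Lambda_2$ or $\Theta_1,\Theta_2$. The plan is therefore to exploit monotonicity and concavity of the logarithm: first drop the nonnegative velocity contributions using monotonicity of $\ln$, then apply Jensen's inequality to each of the four logarithmic terms, and finally observe that the weights recombine exactly to produce the right-hand side.

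More concretely, I would proceed as follows. From \eqref{contemp} we immediately have
$\Lambda_{12}\ge \alpha\Lambda_1+(1-\alpha)\Lambda_2$, since $\gamma\ge 0$, and $\Theta_{12}$ is itself the convex combination $\tfrac{l_1}{l_1+l_2}\Theta_1+\tfrac{l_2}{l_1+l_2}\Theta_2$. For $\Theta_{21}$ the formula \eqref{Theta21} is already a convex combination of $\Theta_1$ and $\Theta_2$, so nothing has to be discarded. The only delicate point is $\Lambda_{21}$ in \eqref{Lambda21}: one must show that the coefficient in front of $|u_1-u_2|^2$ is nonnegative so that $\Lambda_{21}\ge \varepsilon(1-\alpha)\Lambda_1+(1-\varepsilon(1-\alpha))\Lambda_2$. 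This is where the hypotheses enter and is the main technical point: the bracket equals $\varepsilon\bigl[\tfrac{m_1}{d}(1-\delta)\bigl((1+\tfrac{m_1}{m_2}\varepsilon)\delta+1-\tfrac{m_1}{m_2}\varepsilon\bigr)-\gamma\bigr]$, which is precisely nonnegative thanks to the positivity-of-temperatures condition \eqref{gamma}.

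Once these four lower bounds are in place, Jensen's inequality applied to the concave function $\ln$ yields
\begin{align*}
\ln\Lambda_{12} &\ge \alpha\ln\Lambda_1+(1-\alpha)\ln\Lambda_2,\\
\ln\Theta_{12} &\ge \tfrac{l_1}{l_1+l_2}\ln\Theta_1+\tfrac{l_2}{l_1+l_2}\ln\Theta_2,\\
\ln\Lambda_{21} &\ge \varepsilon(1-\alpha)\ln\Lambda_1+(1-\varepsilon(1-\alpha))\ln\Lambda_2,\\
\ln\Theta_{21} &\ge \varepsilon\tfrac{l_1}{l_1+l_2}\ln\Theta_1+(1-\varepsilon\tfrac{l_1}{l_1+l_2})\ln\Theta_2.
\end{align*}
Multiplying these four inequalities by $\tfrac{\varepsilon d}{2},\tfrac{\varepsilon l_1}{2},\tfrac{d}{2},\tfrac{l_2}{2}$ respectively and adding, one checks that the coefficient of $\ln\Lambda_1$ is $\tfrac{\varepsilon d}{2}[\alpha+(1-\alpha)]=\tfrac{\varepsilon d}{2}$, that of $\ln\Lambda_2$ is $\tfrac{\varepsilon d}{2}(1-\alpha)+\tfrac{d}{2}(1-\varepsilon(1-\alpha))=\tfrac{d}{2}$, that of $\ln\Theta_1$ is $\tfrac{\varepsilon l_1}{2}\cdot\tfrac{l_1}{l_1+l_2}+\tfrac{l_2}{2}\cdot\varepsilon\tfrac{l_1}{l_1+l_2}=\tfrac{\varepsilon l_1}{2}$, and that of $\ln\Theta_2$ is $\tfrac{\varepsilon l_1}{2}\cdot\tfrac{l_2}{l_1+l_2}+\tfrac{l_2}{2}(1-\varepsilon\tfrac{l_1}{l_1+l_2})=\tfrac{l_2}{2}$. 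These are exactly the coefficients appearing on the right-hand side of the claimed inequality, which completes the proof.

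The only real obstacle is verifying nonnegativity of the velocity coefficient in $\Lambda_{21}$; everything else is a bookkeeping exercise. It is worth noting that the weights in the four Jensen bounds were chosen precisely so that the $\alpha$-dependence and the $\tfrac{l_1}{l_1+l_2}$-dependence cancel out in the sum, which is why the final estimate is independent of the free parameters $\alpha,\gamma,\delta$. Equality will hold if and only if $\Lambda_1=\Lambda_2$, $\Theta_1=\Theta_2$, and $\gamma|u_1-u_2|^2=0$, i.e.\ in local equilibrium, which is consistent with the role this lemma will play in the H-theorem.
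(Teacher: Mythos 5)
Your proof is correct and follows essentially the same route as the paper's: drop the nonnegative $|u_1-u_2|^2$ contributions by monotonicity of $\ln$, apply concavity (Jensen) to the resulting convex combinations, and check that the weighted sum of the four bounds reproduces the right-hand side exactly. Your explicit verification that the velocity coefficient in $\Lambda_{21}$ is nonnegative via \eqref{gamma} is a point the paper leaves implicit under "positivity of all temperatures," but the argument is the same.
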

\begin{proof}
First we consider the part $E_1:= \frac{d}{2} \ln \Lambda_{12} + \frac{l_1}{2} \ln \Theta_{12}.$ We insert the definitions of $\Lambda_{12}$ and $\Theta_{12}$ into $E_1$ and use the monotonicity of $\ln$ to drop the velocity term. Then we obtain 
$$ E_1 \geq \frac{d}{2} \ln ( \alpha \Lambda_1 + (1- \alpha) \Lambda_2) + \frac{l_1}{2} \ln ( \frac{l_1}{l_1+l_2} \Theta_1 + \frac{l_2}{l_1+l_2} \Theta_2).$$
Now we use that $\ln$ is concave and get
\begin{align}
E_1 \geq \frac{d}{2} \alpha \ln \Lambda_1 + \frac{d}{2} (1- \alpha) \ln \Lambda_2 + \frac{l_1}{2} \frac{l_1}{l_1+l_2} \ln \Theta_1 + \frac{l_1}{2} \frac{l_2}{l_1+l_2} \ln\Theta_2.
\label{E1}
\end{align}
Doing the same with the second part $E_2:= \frac{d}{2} \ln \Lambda_{21} + \frac{l_2}{2} \ln \Theta_{21}$ using that $\frac{l_1}{l_1+l_2} \varepsilon \leq 1$, we obtain
 \begin{align}
 E_2 \geq \frac{d}{2} \varepsilon (1- \alpha) \ln \Lambda_1 + \frac{d}{2} (1- \varepsilon (1 - \alpha)) \ln \Lambda_2 + \frac{l_2}{2}  \varepsilon \frac{l_1}{l_1+l_2} \ln \Theta_1 + \frac{l_2}{2} ( 1-  \varepsilon \frac{l_1}{l_1+l_2}) \ln \Theta_2.
 \label{E2}
 \end{align}
 Multiplying \eqref{E1} by $\varepsilon$ and adding \eqref{E2}, we get
 $$ \varepsilon E_1 + E_2 \geq \frac{d}{2} \varepsilon \ln \Lambda_1 + \frac{d}{2} \ln \Lambda_2 + \frac{l_1}{2} \varepsilon \ln \Theta_1 + \frac{l_2}{2} \ln \Theta_2.$$
 which is the required inequality.
\end{proof}
\begin{lemma}
%Assume $z_1,z_2, z_{21} \leq 1$ and $\nu_{11} n_1 \geq \nu_{12} n_2$. 
Assume $f_1, f_2 >0$.
Assume the relationship between the collision frequencies \eqref{coll}, the conditions for the interspecies Maxwellians \eqref{density}, \eqref{convexvel}, \eqref{veloc}, \eqref{contemp} and \eqref{temp} and the positivity of all temperatures. Then
\begin{align*}
\nu_{12} n_2 \int M_{12} \ln M_{12} dv d\eta_{l_1}  + \nu_{21} n_1 \int M_{21} \ln M_{21} dv d\eta_{l_2} \\ \leq \nu_{12} n_2 \int M_1 \ln M_1 dv d\eta_{l_1} + \nu_{21} n_1 \int M_2 \ln M_2 dv d\eta_{l_2}.
\end{align*}
\label{inequ_cor}
\end{lemma}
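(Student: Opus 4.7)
The plan is to reduce the claim to the inequality in the previous lemma (Lemma \ref{inequ}) by computing the Gaussian entropies explicitly.

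First I would note that for any Maxwellian of the form
\begin{align*}
M = \frac{n}{\sqrt{2\pi \Lambda/m}^{d}}\,\frac{1}{\sqrt{2\pi\Theta/m}^{l}}\exp\!\left(-\frac{|v-u|^2}{2\Lambda/m}-\frac{|\eta_l|^2}{2\Theta/m}\right),
\end{align*}
a direct computation using $\int M\,dv\,d\eta_l = n$ and the second moment identities $\int M|v-u|^2\,dv\,d\eta_l = nd\Lambda/m$ and $\int M|\eta_l|^2\,dv\,d\eta_l = nl\Theta/m$ yields
\begin{align*}
\int M\ln M\,dv\,d\eta_l = n\ln n - n\tfrac{d}{2}\ln(2\pi\Lambda/m) - n\tfrac{l}{2}\ln(2\pi\Theta/m) - n\tfrac{d+l}{2}.
\end{align*}
I would apply this formula in turn to $M_1,M_{12},M_2,M_{21}$, recalling that $n_{12}=n_1$ and $n_{21}=n_2$ by \eqref{density} and that $M_{12}$ carries the mass $m_1$ and internal degrees $l_1$ (hence shares the prefactor structure with $M_1$), while $M_{21}$ mirrors $M_2$.

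Because all prefactors independent of $\Lambda,\Theta$ (namely the $n\ln n$, $\ln m$, $\ln(2\pi)$, and $\tfrac{d+l}{2}$ terms) cancel pairwise in the differences $\int M_{12}\ln M_{12} - \int M_1\ln M_1$ and $\int M_{21}\ln M_{21} - \int M_2\ln M_2$, the desired inequality reduces after multiplying through by the collision frequencies to
\begin{align*}
-\nu_{12}n_2 n_1\bigl[\tfrac{d}{2}\ln\Lambda_{12} + \tfrac{l_1}{2}\ln\Theta_{12} - \tfrac{d}{2}\ln\Lambda_1 - \tfrac{l_1}{2}\ln\Theta_1\bigr] \\
-\,\nu_{21}n_1 n_2\bigl[\tfrac{d}{2}\ln\Lambda_{21} + \tfrac{l_2}{2}\ln\Theta_{21} - \tfrac{d}{2}\ln\Lambda_2 - \tfrac{l_2}{2}\ln\Theta_2\bigr] \leq 0.
\end{align*}
Dividing by $-\nu_{21}n_1 n_2$ and using the relation $\nu_{12}=\varepsilon \nu_{21}$ from \eqref{coll}, this is exactly
\begin{align*}
\varepsilon\tfrac{d}{2}\ln\Lambda_{12} + \varepsilon\tfrac{l_1}{2}\ln\Theta_{12} + \tfrac{d}{2}\ln\Lambda_{21} + \tfrac{l_2}{2}\ln\Theta_{21} \geq \varepsilon\tfrac{d}{2}\ln\Lambda_1 + \tfrac{d}{2}\ln\Lambda_2 + \varepsilon\tfrac{l_1}{2}\ln\Theta_1 + \tfrac{l_2}{2}\ln\Theta_2.
\end{align*}

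Finally I would invoke Lemma \ref{inequ}, whose hypotheses coincide with ours (positivity of all temperatures, the conditions \eqref{coll},\eqref{density},\eqref{convexvel},\eqref{veloc},\eqref{contemp},\eqref{temp}), since this is precisely the inequality it proves. The only point that requires a bit of care is the sign flip: the overall minus sign in front of the log-temperature differences (coming from the $-\tfrac{d}{2}\ln(\cdot) - \tfrac{l}{2}\ln(\cdot)$ in the Gaussian entropy) turns an upper bound on entropies into a lower bound on log-temperatures, which is exactly the direction supplied by Lemma \ref{inequ}. No genuinely new estimate is needed; the main task is the bookkeeping that makes the cancellations visible and matches the weights on the two sides.
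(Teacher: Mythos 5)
Your proof is correct and follows essentially the same route as the paper: compute the Gaussian entropies explicitly, observe that the constant terms cancel (since $n_{12}=n_1$, $n_{21}=n_2$ and the masses and degrees of freedom match pairwise), and reduce the claim, after dividing by $-\nu_{21}n_1n_2$ and using $\nu_{12}=\varepsilon\nu_{21}$, to the log-temperature inequality of Lemma \ref{inequ}. The only difference is that you spell out the bookkeeping and the sign flip that the paper compresses into ``after a brief manipulation.''
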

\begin{proof}
Using that $\ln M_{12} = \ln(\frac{n_2}{\sqrt{2 \pi \frac{\Lambda_{12}}{m_1}}^{d} } \frac{1}{\sqrt{2 \pi\frac{\Theta_{12}}{m_1}}^{l_1}}) - \frac{|v-u_{12}|^2}{2 \frac{\Lambda_{12}}{m_1}} - \frac{|\eta_{l_1}|^2}{2 \frac{\Theta_{12}}{m_1}})$, \\ $\ln M_{21} = \ln(\frac{n_1}{\sqrt{2 \pi \frac{\Lambda_{21}}{m_2}}^{d} } \frac{1}{\sqrt{2 \pi \frac{\Theta_{21}}{m_2}}^{l_2}}) - \frac{|v-u_{21}|^2}{2 \frac{\Lambda_{21}}{m_2}} - \frac{|\eta_{l_2}|^2}{2 \frac{\Theta_{21}}{m_2}})$\\
$\ln M_k = \ln(\frac{n_k}{\sqrt{2 \pi \frac{\Lambda_k}{m_k}}^d} \frac{1}{\sqrt{2 \pi \frac{\Theta_k}{m_k}}^{l_k}}) - \frac{|v-u_k|^2}{2 \frac{\Lambda_k}{m_k}}) - \frac{|\eta_{l_k}|^2}{2 \frac{\Theta_k}{m_k}}$, $k=1,2$, we compute the integrals and obtain that the required inequalities are equivalent to
\begin{align*}
\varepsilon \ln(\frac{n_1}{\sqrt{2 \pi \frac{\Lambda_{12}}{m_1}}^d} \frac{1}{\sqrt{2 \pi \frac{\Theta_{12}}{m_1}}^{l_1}}) +\ln(\frac{n_2}{\sqrt{2 \pi \frac{\Lambda_{21}}{m_2}}^d} \frac{1}{\sqrt{2 \pi \frac{\Theta_{21}}{m_2}}^{l_2}})\\ \leq \varepsilon \ln(\frac{n_1}{\sqrt{2 \pi \frac{\Lambda_{1}}{m_1}}^d} \frac{1}{\sqrt{2 \pi \frac{\Theta_{1}}{m_1}}^{l_1}}) +\ln(\frac{n_2}{\sqrt{2 \pi \frac{\Lambda_{2}}{m_2}}^d} \frac{1}{\sqrt{2 \pi \frac{\Theta_{2}}{m_2}}^{l_2}}).
\end{align*} \textcolor{black}{ This inequality is true since after a brief manipulation of the inequality it is equivalent to lemma \ref{inequ}.}
\end{proof}
\begin{theorem}[H-theorem for mixture]
Assume $f_1, f_2 >0$. Assume %$z_1,z_2 \leq 1$ and 
$\nu_{11} n_1 \geq \nu_{12} n_2$, $\nu_{22} n_2 \geq \nu_{21} n_1$, $\alpha, \delta\neq 1, l_1,l_2 \neq 0.$
Assume the relationship between the collision frequencies \eqref{coll}, the conditions for the interspecies Maxwellians \eqref{density}, \eqref{convexvel}, \eqref{veloc}, \eqref{contemp} and \eqref{temp} and the positivity of all temperatures, then
\begin{align*}
\sum_{k=1}^2 [ \nu_{kk} n_k \int (G_k - f_k) \ln f_k dv d\eta_{l_k} + %\frac{
\nu_{kk} n_k%}{z_k}
 \int (\widetilde{G}_k - \widehat{G}_k) \ln \widehat{G}_k dv d\eta_{l_k}] \\+ %\frac{
 \nu_{11} n_1%}{z_1}
  \int (\widetilde{G}_1 - \widehat{G}_1) \ln \widehat{G}_1 dv d\eta_{l_1} + %\frac{
  \nu_{22} n_2%}{z_2} 
  \int (\widetilde{G}_2 - \widehat{G}_2) \ln \widehat{G}_2 dv d\eta_{l_2} \\ +\nu_{12} n_2 \int (M_{12} - f_1) \ln f_1 dv d\eta_{l_1} + \nu_{21} n_1 \int (M_{21} - f_2) \ln f_2 dv d\eta_{l_2}  \leq 0,
\end{align*}
with equality if and only if $f_1$ and $f_2$ are in local equilibrium (see definition \ref{localequ}).
\label{H-theorem}
\end{theorem}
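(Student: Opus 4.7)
I split the left-hand side by species and collect all contributions for species $k$ (with $j\ne k$) into
\[
\mathcal{I}_k := \nu_{kk}n_k\int(G_k-f_k)\ln f_k\,dv\,d\eta_{l_k} + 2\nu_{kk}n_k\int(\widetilde{G}_k-\widehat{G}_k)\ln\widehat{G}_k\,dv\,d\eta_{l_k} + \nu_{kj}n_j\int(M_{kj}-f_k)\ln f_k\,dv\,d\eta_{l_k},
\]
where the factor $2$ accumulates the copy appearing inside $\sum_{k=1}^{2}$ and the one written separately as $\nu_{kk}n_k\int(\widetilde{G}_k-\widehat{G}_k)\ln\widehat{G}_k$. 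The claim then reduces to $\mathcal{I}_1+\mathcal{I}_2\le 0$. For each $k$ I first pair one copy of $(\widetilde{G}_k-\widehat{G}_k)\ln\widehat{G}_k$ with the integral $(G_k-f_k)\ln f_k$ and invoke Lemma~\ref{one_species} directly: this intra-species piece is nonpositive, with equality iff $f_k=M_k$ and $\Lambda_k=\Theta_k=T_k^{t}=T_k^{r}$.

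\textbf{Residual bound via convexity and Lemma~\ref{Mtilde}.} For the leftover $\nu_{kk}n_k\int(\widetilde{G}_k-\widehat{G}_k)\ln\widehat{G}_k$ plus $\nu_{kj}n_j\int(M_{kj}-f_k)\ln f_k$, I apply the strict convexity of $H(x)=x\ln x-x$ in the form $(g-f)\ln f\le g\ln g-f\ln f-g+f$; the linear terms integrate to zero because $\widetilde{G}_k$ and $\widehat{G}_k$ share density $n_k$ by construction, and $M_{kj}$ and $f_k$ share density $n_k$ by \eqref{density}. Chaining Lemma~\ref{Mtilde} with Remark~\ref{one} then gives $\int\widetilde{G}_k\ln\widetilde{G}_k\le\int f_k\ln f_k$ and $\int\widehat{G}_k\ln\widehat{G}_k\ge\int M_k\ln M_k$, so that the residual is dominated by
\[
\sum_{k}\Big\{\nu_{kk}n_k\big(\textstyle\int f_k\ln f_k-\int M_k\ln M_k\big)+\nu_{kj}n_j\big(\int M_{kj}\ln M_{kj}-\int f_k\ln f_k\big)\Big\}.
\]

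\textbf{Interspecies control and main obstacle.} Applying Lemma~\ref{inequ_cor} replaces $\nu_{12}n_2\int M_{12}\ln M_{12}+\nu_{21}n_1\int M_{21}\ln M_{21}$ by $\nu_{12}n_2\int M_1\ln M_1+\nu_{21}n_1\int M_2\ln M_2$; regrouping recasts the bound as $\sum_k(\nu_{kk}n_k-\nu_{kj}n_j)\big[\int f_k\ln f_k-\int M_k\ln M_k\big]$. Under the hypotheses $\nu_{11}n_1\ge\nu_{12}n_2$ and $\nu_{22}n_2\ge\nu_{21}n_1$ the coefficients are nonnegative, so the decisive (and, in my view, hardest) step is to sign the brackets. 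I expect this to require the full strength of \eqref{internal}, which ties $\Lambda_k,\Theta_k$ to $T_k^{t},T_k^{r}$ so that $M_k$ and $f_k$ share density, mean velocity and total energy, combined with the Maxwell minimum-entropy principle applied through the common-temperature isotropic Maxwellian $\widetilde{M}_k$ at the equilibrium temperature $T_k$ given by \eqref{equ_temp}.

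\textbf{Equality.} Sharpness in each step must hold simultaneously. Lemma~\ref{one_species} already forces $f_k=M_k$ and $\Lambda_k=\Theta_k=T_k^{t}=T_k^{r}$ for each $k$; strict convexity of $x\ln x-x$ in the residual step forces $M_{kj}=f_k$ and $\widetilde{G}_k=\widehat{G}_k$; equality in Lemma~\ref{inequ_cor} (which via the strict concavity of $\ln$ used in Lemma~\ref{inequ}) enforces $\Lambda_1=\Lambda_2$ and $\Theta_1=\Theta_2$; and $M_{12}=f_1$ gives $u_{12}=u_1$, which together with \eqref{convexvel} and $\delta\ne 1$ yields $u_1=u_2$. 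These are precisely the conditions of local equilibrium in the sense of Definition~\ref{localequ}.
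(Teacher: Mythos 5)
Your setup --- the per-species grouping with the factor $2$, the use of Lemma~\ref{one_species} on one copy of the $\widehat{G}_k$-term, the convexity inequality $(g-f)\ln f\le g\ln g-f\ln f+f-g$, the cancellation of the density terms, and the appeal to Lemma~\ref{inequ_cor} for the interspecies Maxwellians --- follows the paper's proof. The gap is exactly the step you flag as the hardest one, and it is not fillable as you have arranged the terms. By compressing the two inequalities $\int\widehat{G}_k\ln\widehat{G}_k\,dv\,d\eta_{l_k}\ge\int G_k\ln G_k\,dv\,d\eta_{l_k}\ge\int M_k\ln M_k\,dv\,d\eta_{l_k}$ of Lemma~\ref{Mtilde} into the single bound $\int\widehat{G}_k\ln\widehat{G}_k\ge\int M_k\ln M_k$, while simultaneously using $\int\widetilde{G}_k\ln\widetilde{G}_k\le\int f_k\ln f_k$, you arrive at the majorant $\sum_k(\nu_{kk}n_k-\nu_{kj}n_j)\bigl[\int f_k\ln f_k-\int M_k\ln M_k\bigr]$. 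That bracket cannot be signed nonpositive: the minimum-entropy (Gibbs) principle you propose to invoke runs the other way. Indeed $\int f_k\ln f_k\ge\int f_k\ln M_k=\int M_k\ln M_k+\tfrac{d}{2}n_k\bigl(1-T_k^{t}/\Lambda_k\bigr)+\tfrac{l_k}{2}n_k\bigl(1-T_k^{r}/\Theta_k\bigr)$, so whenever $\Lambda_k=T_k^{t}$ and $\Theta_k=T_k^{r}$ but $f_k\neq M_k$ the bracket is strictly positive, and in general its sign is indeterminate because $M_k$ carries the auxiliary temperatures $\Lambda_k,\Theta_k$ rather than the actual moments of $f_k$. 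Your chain of upper bounds is valid step by step but terminates at a quantity that is generically nonnegative, so it proves nothing.

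The paper avoids this by keeping $\int G_k\ln G_k$ as a pivot. After Lemma~\ref{inequ_cor} it bounds $\int M_k\ln M_k\le\int G_k\ln G_k$ (third inequality of Lemma~\ref{Mtilde}), so the interspecies contribution becomes $\nu_{kj}n_j\bigl[\int G_k\ln G_k-\int f_k\ln f_k\bigr]$; the hypothesis $\nu_{kk}n_k\ge\nu_{kj}n_j$ is then used to telescope this against $\nu_{kk}n_k\bigl[\int\widetilde{G}_k\ln\widetilde{G}_k-\int G_k\ln G_k\bigr]$, yielding $\nu_{kj}n_j\bigl[\int\widetilde{G}_k\ln\widetilde{G}_k-\int f_k\ln f_k\bigr]$. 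That final bracket is $\le 0$ by the first inequality of Lemma~\ref{Mtilde} combined with Remark~\ref{one}, namely $\int\widetilde{G}_k\ln\widetilde{G}_k\le\int\bar{G}_k\ln\bar{G}_k\le\int f_k\ln f_k$ --- a comparison between $\widetilde{G}_k$ and $f_k$, not between $f_k$ and $M_k$. You never invoke the first inequality of Lemma~\ref{Mtilde} at the decisive point; reinstating the pivot $\int G_k\ln G_k$ and performing that telescoping is what closes the argument. Your equality discussion is essentially correct once the inequality chain is repaired along these lines.
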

\begin{remark}
The inequality in the H-Theorem is still true if $l_1=0$ or $l_2=0$ which means that one species is mono atomic. In this case only the equalities with $\Theta_1$ and $\Theta_2$, respectively in the local equilibrium vanish.
\end{remark}
\begin{proof}
The fact that $ \nu_{kk} n_k \int (G_k - f_k) \ln f_k dv d\eta_{l_k} + %\frac{
\nu_{kk} n_k
%}{z_k} 
\int (\widetilde{G}_k - \widehat{G}_k) \ln \widehat{G}_k dv d\eta_{l_k} \leq 0, k=1,2$ is shown in Lemma \ref{one_species}.
In both cases we have equality if and only if $f_1=G_1$ with $\Lambda_1=\Theta_1=T_1^{t}=T_1^{r}$ and $f_2 = G_2$ with $\Lambda_2=\Theta_2=T_2^{t}=T_2^{r}$. \\
Let us define 
\begin{align*}
S(f_1,f_2) :=%\frac{
\nu_{11} n_1%}{z_1}
 \int (\widetilde{G}_1 - \widehat{G}_1) \ln \widehat{G}_1 dv d\eta_{l_1} + %\frac{
 \nu_{22} n_2%}{z_2} 
 \int (\widetilde{G}_2 - \widehat{G}_2) \ln \widehat{G}_2 dv d\eta_{l_2} \\ +\nu_{12} n_2 \int (M_{12} - f_1) \ln f_1 dv d\eta_{l_1} + \nu_{21} n_1 \int (M_{21} - f_2) \ln f_2 dv d\eta_{l_2}. 
\end{align*}
The task is to prove that $S(f_1, f_2)\leq 0$.
Since the function $H(x)= x \ln x -x$ is strictly convex for $x>0$, we have $H'(f) (g-f) \leq H(g) - H(f)$ with equality if and only if $g=f$. So \begin{align}
(g-f) \ln f  \leq g \ln g - f \ln f +f -g. 
\label{convex}
\end{align}
Consider now $S(f_1,f_2)$ and apply the inequality \eqref{convex} to each of the terms in $S$.
% In the first term we choose $f=f_1$ and $g=M_{12}$ and in the second term $f=f_2$ and $g=M_{21}$
\begin{align*}
S \leq
%\frac{
\nu_{11} n_1%}{z_1} 
[ \int \widetilde{G}_1 \ln \widetilde{G}_1  dv d\eta_{l_1} - \int \widehat{G}_1 \ln \widehat{G}_1 dv d\eta_{l_1}  + \int \widehat{G}_1 dv d\eta_{l_1} - \int \widetilde{G}_1 dv d\eta_{l_1} ]\\ + \nu_{12} n_2 [ \int M_{12} \ln M_{12} dv d\eta_{l_1} - \int f_1 \ln f_1 dv d\eta_{l_1}+ \int f_1 dv\eta - \int M_{12} dv d\eta_{l_1} ]\\+ \nu_{21} n_1 [ \int M_{21} \ln M_{21} dv d\eta_{l_2} - \int f_2 \ln f_2 dv d\eta_{l_2} + \int f_2 dv d\eta - \int M_{21} dv d\eta_{l_2} ]\\  + %\frac{
\nu_{22} n_2%}{z_2}
 [ \int \widetilde{G}_2 \ln \widetilde{G}_2 dv d\eta_{l_2} - \int \widehat{G}_2 \ln \widehat{G}_2 dv d\eta_{l_2} + \int \widehat{G}_2 dv d\eta_{l_2} - \int \widetilde{G}_2 dv d\eta_{l_2}],
\end{align*}
with equality if and only if $f_1=M_{12}$, $f_2=M_{21}$, $\widetilde{G}_1=\widehat{G}_1$ and $\widetilde{G}_2=\widehat{G}_2$. Combining this with the condition for equality of the single collision term $f_1=G_1$ with $\Lambda_1=\Theta_1=T_1^{t}=T_1^{r}$ and $f_2=G_2$ with $\Lambda_2=\Theta_2=T_2^{t}=T_2^{r}$, we get that we have equality if and only if we are in local equilibrium (see definition \ref{localequ}). % Then $u_{12}= \delta u_1 + (1- \delta) u_2 = u_1$ from which we can deduce $u_1=u_2=u_{21}=u_{12}$ and $\Lambda_1=T_2=T_{12}=T_{21}$. This means $f_1$ and $f_2$ are Maxwell distributions with equal bulk velocity and temperature. \\
Since $\widehat{G}_1, \widetilde{G}_1, f_1$ and $M_{12}$ have the same density and $\widehat{G}_2, \widetilde{G}_2, M_{21}$ and $f_2$ have the same density, too,  the right-hand side reduces to
\begin{align*}
S \leq
%\frac{
\nu_{11} n_1%}{z_1} 
[ \int \widetilde{G}_1 \ln \widetilde{G}_1  dv d\eta_{l_1} - \int \widehat{G}_1 \ln \widehat{G}_1 dv d\eta_{l_1} ]\\ + \nu_{12} n_2 [ \int M_{12} \ln M_{12} dv d\eta_{l_1} - \int f_1 \ln f_1 dv d\eta_{l_1} ]\\+ \nu_{21} n_1 [ \int M_{21} \ln M_{21} dv d\eta_{l_2} - \int f_2 \ln f_2 dv d\eta_{l_2} ]\\  + %\frac{
\nu_{22} n_2%}{z_2} 
[ \int \widetilde{G}_2 \ln \widetilde{G}_2 dv d\eta_{l_2} - \int \widehat{G}_2 \ln \widehat{G}_2 dv d\eta_{l_2}].
\end{align*}
According to the second part of lemma \ref{Mtilde}, % the first term and the last term are non-positive. Since we assumed $z_1, z_2 \leq 1,$ we can estimate them by $1$ from above and obtain
we obtain
\begin{align*}
S \leq
\nu_{11} n_1 [ \int \widetilde{G}_1 \ln \widetilde{G}_1  dv d\eta_{l_1} - \int G_1 \ln G_1 dv d\eta_{l_1} ]\\ + \nu_{12} n_2 [ \int M_{12} \ln M_{12} dv d\eta_{l_1} - \int f_1 \ln f_1 dv d\eta_{l_1} ]\\+ \nu_{21} n_1 [ \int M_{21} \ln M_{21} dv d\eta_{l_2} - \int f_2 \ln f_2 dv d\eta_{l_2} ]\\  + \nu_{22} n_2 [ \int \widetilde{G}_2 \ln \widetilde{G}_2 dv d\eta_{l_2} - \int G_2 \ln G_2 dv d\eta_{l_2}].
\end{align*}
According to lemma \ref{inequ_cor}, the last part of lemma \ref{Mtilde} and the assumption that $\nu_{kk} n_k \geq \nu_{kj} n_j, ~ k,j=1,2, k \neq j$, we get
\begin{align*}
S \leq
\nu_{11} n_1 [ \int \widetilde{G}_1 \ln \widetilde{G}_1  dv d\eta_{l_1} - \int G_1 \ln G_1 dv d\eta_{l_1} ]\\ + \nu_{12} n_2 [ \int G_{1} \ln G_{1} dv d\eta_{l_1} - \int f_1 \ln f_1 dv d\eta_{l_1} ]\\+ \nu_{21} n_1 [ \int G_{2} \ln G_{2} dv d\eta_{l_2} - \int f_2 \ln f_2 dv d\eta_{l_2} ]\\ + \nu_{22} n_2 [ \int \widetilde{G}_2 \ln \widetilde{G}_2 dv d\eta_{l_2} - \int G_2 \ln G_2 dv d\eta_{l_2}]\\ \leq  \nu_{12} n_2 [ \int \widetilde{G}_1 \ln \widetilde{G}_1  dv d\eta_{l_1}  - \int f_1 \ln f_1 dv d\eta_{l_1} ]\\+ \nu_{21} n_1 [ \int \widetilde{G}_2 \ln \widetilde{G}_2 dv d\eta_{l_2}   - \int f_2 \ln f_2 dv d\eta_{l_2} ].
\end{align*}
%With the assumptions that $\nu_{11} n_1 \geq \nu_{12} n_2$ and $\nu_{22} n_2 \geq \nu_{21} n_1$ and the first part of lemma \ref{Mtilde},  the first and the last bracket are non-positive and we get
%\begin{align*}
%S \leq
%\nu_{12} n_2 [ \int \tilde{G}_1 \ln \tilde{G}_1  dv d\eta_{l_1} - \int G_1 \ln G_1 dv d\eta_{l_2} ]\\ + \nu_{12} n_2 [ \int G_{1} \ln G_{1} dv d\eta_{l_1} - \int f_1 \ln f_1 dv d\eta_{l_1} ]\\+ \nu_{21} n_1 [ \int G_{2} \ln G_{2} dv d\eta_{l_2} - \int f_2 \ln f_2 dv d\eta_{l_2} ]\\ + \nu_{21} n_1 [ \int \tilde{G}_2 \ln \tilde{G}_2 dv d\eta_{l_2} - \int G_2 \ln G_2 dv d\eta_{l_2}] = \\ \nu_{12} n_2 [ \int \tilde{G}_1 \ln \tilde{G}_1  dv d\eta_{l_1}  - \int f_1 \ln f_1 dv d\eta_{l_1} ]\\+ \nu_{21} n_1 [ \int \tilde{G}_2 \ln \tilde{G}_2 dv d\eta_{l_2}   - \int f_2 \ln f_2 dv d\eta_{l_2} ]
%\end{align*}
which leads to $S\leq 0$ using the \textcolor{black}{first} part of lemma \ref{Mtilde} and remark \ref{one}.
%Since $\int M \ln M dv = n \ln(\frac{n}{\sqrt{\frac{2 \pi T}{m}}^3})- \frac{3}{2} n$ for $M=\frac{n}{\sqrt{\frac{2 \pi T}{m}}^3}e^{-\frac{|v-u|^2}{\frac{2T}{m}}},$ we will have that
%$$
%\nu_{12} n_2  \int M_{12} \ln M_{12}  dv + \nu_{21} n_1 \int M_{21} \ln M_{21}  dv $$$$\leq \nu_{21} n_1 \int M_2 \ln M_2  dv  + \nu_{12} n_2  \int M_1 \ln M_1  dv 
%$$
%provided that
%\begin{align*}
%\nu_{12} n_2 n_1 \ln \frac{n_1}{\sqrt{2 \pi \frac{T_{12}}{m_1}}^3} +\nu_{21} n_2 n_1 \ln \frac{n_2}{\sqrt{2 \pi \frac{T_{21}}{m_2}}^3} \\ \leq \nu_{12} n_2 n_1 \ln \frac{n_1}{\sqrt{2 \pi \frac{T_1}{m_1}}^3} +\nu_{21} n_2 n_1 \ln \frac{n_2}{\sqrt{2 \pi \frac{T_2}{m_2}}^3}
%\end{align*}
%which is equivalent to the condition 
%$$
%\varepsilon \ln T_{12} + \ln T_{21}  \geq \varepsilon \ln T_1 + \ln T_2,
%$$
%proven in Lemma \ref{inequ}.
%\\
%With this inequality we get
%\begin{align*}
%S(f_1,f_2) \leq
%&\nu_{12} n_2 [ \int M_1 \ln M_{1}  dv - \int f_1 \ln f_1 dv ]\\&+ \nu_{21} n_1 [ M_2 \ln M_{2} dv - \int f_2 \ln f_2 dv ] \leq 0 .
%\end{align*}
%The last inequality follows from remark (\ref{one}). Here we also have equality if and only if $f_1=M_1$ and $f_2=M_2$, but since we already noticed that equality also implies $f_1=M_{12}$ and $f_2=M_{21}$, we also have $T_{21}=T_2=T_1=T_{12}$ and $u_1=u_2=u_{12}=u_{21}$.

\end{proof}
 Using the definition \eqref{H-funct}, we can compute 
\begin{align*} \partial_t H(f_1,f_2) + \nabla_x \cdot \int (f_1 \ln f_1  + 2 z_1 \widehat{G}_1 \ln \widehat{G}_1)v dv d\eta_{l_1}\\+ \nabla_x \cdot\int (f_2 \ln f_2 + 2 z_2 \widehat{G}_2 \ln \widehat{G}_2 ) v dvd\eta_{l_2} = S(f_1,f_2) \textcolor{black}{+R(f_1,f_2)},
\end{align*}
 by multiplying the BGK equation for species $1$ by $\ln f_1$, the BGK equation for the species $2$ by $\ln f_2$, equations \eqref{kin_TempES} by $3 z_k\ln G_k$ and sum the integrals with respect to $v$ and $\eta_{l_1}$ and $\eta_{l_2}$, respectively. \textcolor{black}{The remaining term $R(f_1,f_2)$  can be bounded by zero from below by an explicit computation  assuming that $\Lambda_k$ and $\Theta_k$ are bounded from below and above and assume that $T_k^{rot} \geq \tilde{C} \Theta_k$ for an appropriate $\tilde{C}$ and $z_k$ small enough.  The additional estimate $T_k^{rot} \geq \tilde{C} \Theta_k$ helps to indicate how to choose the initial data of the artificial temperature $\Theta_k$.}
 \begin{cor}[Entropy inequality for mixtures]
Assume $f_1, f_2 >0$, \textcolor{black}{ $\Lambda_k$ and $\Theta_k$ are bounded from below and above and $T_k^{rot} \geq \tilde{C} \Theta_k$ for an appropriate $\tilde{C}$ and $z_k$ small enough.}
Assume relationship \eqref{coll}, the conditions \eqref{density}, \eqref{convexvel}, \eqref{veloc}, \eqref{contemp} and \eqref{temp} and the positivity of all temperatures \eqref{gamma}, then we have the following entropy inequality
\begin{align*}
\partial_t \left(  H(f_1,f_2) \right) \\+ \nabla_x \cdot \left(\int  v (f_1 \ln f_1  + 3 z_1 \widehat{G} \ln \widehat{G}_1) dv d\eta_{l_1} + \int \textcolor{black}{v}( f_2 \ln f_2+ 3 z_2 \widehat{G}_2 \ln \widehat{G}_2 ) dv d\eta_{l_2} \right) \leq 0,
\end{align*}
with equality if and only if $f_1$ and $f_2$ are in local equilibrium (see definition \ref{localequ}).
\end{cor}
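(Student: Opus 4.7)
The plan is to derive a local-in-$(t,x)$ identity of the form
$\partial_t H(f_1,f_2) + \nabla_x\cdot F = S(f_1,f_2) + R(f_1,f_2),$
in which $S(f_1,f_2)$ is exactly the expression shown non-positive in Theorem \ref{H-theorem}, and $R(f_1,f_2)$ is a remainder that the new hypotheses (boundedness of $\Lambda_k,\Theta_k$, the lower bound $T_k^{rot}\geq \tilde C \Theta_k$ and smallness of $z_k$) force to be non-positive. The corollary then follows by adding $S\leq 0$ and $R\leq 0$, and the equality case is inherited from Theorem \ref{H-theorem}.

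First I would multiply the ES-BGK equation \eqref{ESBGK} for $f_k$ by $\ln f_k$ and equation \eqref{kin_TempES} for $\widehat{G}_k$ by $3 z_k \ln \widehat{G}_k$, then integrate in $(v,\eta_{l_k})$. Using $\ln\phi\,\partial_t\phi = \partial_t(\phi\ln\phi)-\partial_t\phi$ together with the analogous identity for the transport term, the bare $\partial_t\phi$ and $\nabla_x\cdot(v\phi)$ pieces integrate to zero because $f_k$ and $\widehat{G}_k$ share the common density $n_k$ and this density satisfies the local mass-conservation law. The factor $3z_k = 3 Z_r^k d/(d+l_k)$ is designed so that $3z_k\cdot \nu_{kk}n_k(d+l_k)/(d\,Z_r^k) = 3\nu_{kk}n_k$, producing coefficient $3\nu_{kk}n_k$ in front of $(\widetilde{G}_k-\widehat{G}_k)\ln\widehat{G}_k$ after the multiplication. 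Summing over $k=1,2$ then gives the desired identity with $F$ the flux in the statement.

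Next I would split the right-hand side as $S+R$, where $S$ is exactly the quantity shown non-positive in Theorem \ref{H-theorem} (it absorbs two of the three copies of $\nu_{kk}n_k\int(\widetilde{G}_k-\widehat{G}_k)\ln\widehat{G}_k\,dv\,d\eta_{l_k}$ produced by the derivation), and $R$ collects the leftover third copy of that same term, plus the two sign-indefinite cross terms $3z_k\nu_{kk}n_k\int(G_k-f_k)\ln\widehat{G}_k$ and $3z_k\nu_{kj}n_j\int(M_{kj}-f_k)\ln\widehat{G}_k$ for $k=1,2$. The first piece of $R$ is non-positive by the same convexity plus Brunn--Minkowski argument already used in Lemma \ref{Mtilde}. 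For the two cross pieces, I would substitute the explicit expression \eqref{Max_equES} for $\ln\widehat{G}_k$: the constant piece integrates to zero because the three distributions share a common density, the quadratic-in-$v$ piece reduces to traces involving $\Lambda_k^{ten},\Lambda_k^{ES},\mathbb{P}_k$, and the $\eta$-piece reduces to an expression in $T_k^r/\Theta_k$. Under the boundedness of $\Lambda_k,\Theta_k$ and the hypothesis $T_k^{rot}\geq \tilde C\Theta_k$, these contributions are $O(1)$, so multiplying by the small parameter $z_k$ makes them dominated by the strictly negative $\nu_{kk}n_k\int(\widetilde{G}_k-\widehat{G}_k)\ln\widehat{G}_k$ and gives $R\leq 0$.

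The main obstacle is precisely this sign control of $R$. Neither cross term has a definite sign on its own, and only the explicit cancellation obtained after expanding $\ln\widehat{G}_k$, combined with the artificial lower bound $T_k^{rot}\geq \tilde C \Theta_k$ (which prevents the $\eta$-integrals from blowing up when $\Theta_k$ is small) and the smallness of $z_k$, allows them to be absorbed into the dominant non-positive contribution. This is exactly why the hypotheses of the corollary strictly strengthen those of Theorem \ref{H-theorem}.
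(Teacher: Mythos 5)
Your proposal follows essentially the same route as the paper: multiply the kinetic equations by $\ln f_k$ and \eqref{kin_TempES} by $3z_k\ln\widehat{G}_k$, integrate and sum to obtain $\partial_t H+\nabla_x\cdot F=S+R$ with $S$ exactly the quantity of Theorem \ref{H-theorem}, and then control the remainder $R$ (one extra copy of $\nu_{kk}n_k\int(\widetilde{G}_k-\widehat{G}_k)\ln\widehat{G}_k$ plus the two $z_k$-weighted cross terms) using the boundedness of $\Lambda_k,\Theta_k$, the bound $T_k^{rot}\ge\tilde{C}\Theta_k$ and the smallness of $z_k$ --- which is precisely what the paper does, leaving the sign of $R$ to an unspecified ``explicit computation.'' The one caveat is your claim that the leftover $\nu_{kk}n_k\int(\widetilde{G}_k-\widehat{G}_k)\ln\widehat{G}_k\,dv\,d\eta_{l_k}$ is non-positive by the convexity/Brunn--Minkowski argument of Lemma \ref{Mtilde}: that lemma compares $\widetilde{G}_k$ with $\bar{G}_k$ and $\widehat{G}_k$ with $G_k$, not $\widetilde{G}_k$ with $\widehat{G}_k$ directly, so this piece also needs to be absorbed into the explicit estimate of $R$; since the paper supplies no more detail on $R$ than you do, this does not make your argument less complete than the paper's own.
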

\begin{remark}
By computing the integrals 
\textcolor{black}{ \begin{align*} \int \widehat{G}_k \ln \widehat{G}_k dv d\eta_{l_k}  \quad \text{and} \quad \int v \widehat{G}_k \ln \widehat{G}_k dv d\eta_{l_k}\quad \text{for} \quad k=1,2,
 \end{align*}}
   we see that \textcolor{black}{{\small $$\partial_t [\int \widehat{G}_1 \ln \widehat{G}_1 dv d\eta_{l_1}+\int \widehat{G}_2 \ln \widehat{G}_2 dv d\eta_{l_2}] + \nabla_x \cdot [\int v \widehat{G}_1 \ln \widehat{G}_1 dv d\eta_{l_1}+\int v \widehat{G}_2 \ln \widehat{G}_2 dv d\eta_{l_2}] \leq 0,$$}} is equivalent to 
\textcolor{black}{   $$\partial_t (\det(\Lambda_1^{ten}) \Theta_1^{l_1}+\det(\Lambda_2^{ten}) \Theta_2^{l_2}) + \nabla_x \cdot ((\det(\Lambda_1^{ten}) \Theta_1^{l_1}+\det(\Lambda_2^{ten}) \Theta_2^{l_2}) u_k) \leq 0,$$ }
%\begin{align*}
%\partial_t (d &-\text{trace}((\Lambda_1^{ten})^{-1} \mathbb{P}_1) +d -\text{trace}((\Lambda_2^{ten})^{-1} \mathbb{P}_2))\\ &+ \nabla_x \cdot ((d -\text{trace}((\Lambda_1^{ten})^{-1} \mathbb{P}_1) +d -\text{trace}((\Lambda_2^{ten})^{-1} \mathbb{P}_2)) u_k) \leq 0,
%\end{align*} 
   so we could also consider the entropy 
\textcolor{black}{   $$H(f_1,f_2)=\sum_{k=1}^2 \int f_k \ln f_k dv d\eta_{l_k} + z_1 \det(\Lambda_1^{ten}) \Theta_1^{l_1}+z_2 \det(\Lambda_2^{ten}) \Theta_2^{l_2}.$$}
%$$H(f_1,f_2)=\sum_{k=1}^2 \int f_k \ln f_k dv d\eta_{l_k} + d -\text{trace}((\Lambda_1^{ten})^{-1} \mathbb{P}_1) +d -\text{trace}((\Lambda_2^{ten})^{-1} \mathbb{P}_2).$$
   
\end{remark}
\section{Comparison with the ES-BGK model for one species of polyatomic molecules by Andries, Le Tallec, Perlat and Perthame}
\label{sec7}
We will now consider a different ES-BGK model for a single species ES-BGK model of polyatomic molecules.
In \cite{Perthame}, they consider a distribution function $f(t,x,v,I)$ depending on the position $x\in \mathbb{R}^3$, the velocity $v\in\mathbb{R}^3$ and internal energy $\varepsilon(I)= I^{\frac{2}{\delta}}$, $I\in\mathbb{R}^+$ at time $t$. $\delta$ denotes the number of degrees of freedom in internal energy. \textcolor{black}{In \cite{Perthame}, it is assumed that the mass of the particles is equal to $1$. In the following, we assume additionally that $k_B=1$ in this model.} The mass density $\rho$ and mean velocity $u$ are defined as in the model described in the previous subsection integrating with respect to $v$ and $I$. The energy is defined as 
$$ E(x,t) = \int \int (\frac{1}{2} |v|^2 + I^{\frac{\delta}{2}}) f dv dI = \frac{1}{2} \rho |u|^2 + \rho e.$$
The specific internal energy can be divided into 
$$ e_{tr} = \frac{1}{\rho} \int \int \frac{1}{2} |v-u|^2 f dv dI,$$
$$e_{int} = \frac{1}{\rho} \int \int  I^{\frac{2}{\delta}} f dv dI,$$
and associate with this the corresponding temperatures
$$e= e_{tr} + e_{int}= \frac{3+ \delta}{2} R T_{equ},$$
$$ e_{tr}=\frac{3}{2} R T_{tr},$$
$$e_{int} = \frac{\delta}{2} R T_{int},$$
and define $T_{rel} = \theta T_{equ} + (1- \theta) T_{int}.$ They consider the generalized Gaussian for the single species ES-BGK model
$$ \widetilde{G}[f]= \frac{\rho \Lambda_{\delta}}{\sqrt{\det(2 \pi \mathcal{T})}} \frac{1}{R T_{rel}^{\frac{\delta}{2}}} \exp( - \frac{1}{2} (v-u) \cdot \mathcal{T}^{-1} \cdot(v-u) + \frac{I^{\frac{\delta}{2}}}{R T_{rel}}),$$ 
with the tensor $\mathcal{T} = (1- \theta) ((1- \nu) R T_{tr} \textbf{1} + \nu \Theta) + \theta R T_{equ} \textbf{1}$ where only the translational part is replaced by a tensor. $\Theta$ denotes the pressure tensor, $\Lambda_{\delta}$ is a constant ensuring that the integral of $\widetilde{G}[f]$ with respect to $v$ and $I$ is equal to the density $\rho$ and $R$ is the gas constant.
The convex combination in $\theta$ takes into account that $T_{tr}$ and $T_{int}$ relaxes towards the common value $T_{equ}$. In the space-homogeneous case we see that we get the following macroscopic equations
\begin{align*}
\partial_t T_{tr} &= C  ( T_{tr} (1- \theta) + \theta T_{equ} - T_{tr})= C \theta (T_{equ} - T_{tr}),
\\
\partial_t T_{int} &= C \theta (T_{equ} - T_{int}), 
\end{align*}
with some coefficient $C$. These macroscopic equations describe a relaxation of $T_{tr}$ and $T_{int}$ towards $T_{equ}$.

In this paper, we took \cite{Bernard} as basis to extend it to mixtures. 
The main differences of the model in \cite{Perthame} and the model in \cite{Bernard} are the following. The model in \cite{Perthame} has one variable $I\in \mathbb{R}^+$ for all degrees of freedom in internal energy and the model in \cite{Bernard} has one variable $\eta \in \mathbb{R}^M$ to each degree of freedom in internal energy. Moreover, the relaxation of the translational and rotational/vibrational temperatures to a common value is done in \cite{Perthame} by introducing a relaxation temperature $T_{rel}$ and in the model \cite{Bernard} it is done by the additional relaxation equation \eqref{kin_TempES}.

\section{Applications}
\subsection{Chu reduction}
\label{sec8}
In order to reduce the complexity of the variable for rotational and vibrational energy  degrees of freedom $\mu_1,....\mu_{l_k}$ we apply the Chu reduction proposed in \cite{Chu}. It is possible to apply the Chu reduction since $\eta_1,...\eta_{l_k}$ do not appear in the transport operators in \eqref{ESBGK}.
We consider the system of equations
\begin{align}
\begin{split}
\partial_t f_1 + v \cdot \nabla_{x} f_1 = \nu_{11} n_1 (G_1 - f_1) + \nu_{12} n_2 ( M_{12} - f_1), \\
\partial_t f_2 + v \cdot \nabla_{x} f_2 = \nu_{22} n_2 (G_2 - f_2) + \nu_{21} n_1 ( M_{21} - f_2).
\end{split}
\end{align}
Now, consider the reduced functions
$$ g_1 =  \int f_1 d\eta_{l_1}, \quad g_2 =  \int f_2 d\eta_{l_2}.$$
Then they satisfy the equations
\begin{align}
\begin{split}
\partial_t g_1 + v \cdot \nabla_{x} g_1 = \nu_{11} n_1 (\widetilde{G}_1 - g_1) + \nu_{12} n_2 ( \widetilde{M}_{12} - g_1), \\
\partial_t g_2 + v \cdot \nabla_{x} g_2 = \nu_{22} n_2 (\widetilde{G}_2 - g_2) + \nu_{21} n_1 ( \widetilde{M}_{21} - g_2),
\end{split}
\end{align}
where $\widetilde{G}_1, \widetilde{G}_2, \widetilde{M}_{12}$ and $\widetilde{M}_{21}$ are given by
$$\widetilde{G}_1 = \int  G_1 d\eta_{l_1}, \quad \widetilde{M}_{12} =  \int M_{12} d\eta_{l_1}$$
$$\widetilde{G}_2 = \int  G_2 d\eta_{l_2}, \quad \widetilde{M}_{21} =  \int M_{21} d\eta_{l_2}.$$
It is possible to compute the densities
$$ n_1 = \int \int f_1 d\eta_{l_1} dv =  \int g_1 dv, $$
$$ n_2 = \int \int \int f_2 d\eta_{l_2} dv = \int g_2 dv,$$
the velocities 
$$ u_{1}= \int \int \int v f_1 d\eta_{l_1} dv = \int v g_1 dv,$$
$$ u_{2}= \int \int \int v f_2 d\eta_{l_2} dv = \int v g_2 dv,$$
the temperatures
\begin{align*}
 \Lambda_1 &= \frac{1}{n_1} \int \int |v-u_1|^2 f_1 d\eta_{l_1} dv = \frac{1}{n_1} \int |v-u_1|^2 g_1 dv,\\ \Lambda_2 &= \frac{1}{n_2} \int |v-u_2|^2 g_2 dv,\\ \Theta_1 &= \frac{1}{n_1} \int \int |\eta_{l_1}|^2 f_1 d\eta_{l_1} dv = \frac{1}{n_1} \int |\eta_{l_1}|^2 h_1 dv\\ \Theta_2 &= \frac{1}{n_2} \int |\eta_{l_2}|^2 h_2 dv,
 \end{align*}
if we define the reduced functions
$$ h_1 = \int  |\eta_{l_1}|^2 f_1 d\eta_{l_1}, \quad h_2 = \int  |\eta_{l_2}|^2 f_2 d\eta_{l_2},$$
which solve the equations
\begin{align}
\begin{split}
\partial_t h_1 + v \cdot \nabla_{x} h_1 = \nu_{11} n_1 (\widetilde{\widetilde{G}}_1 - h_1) + \nu_{12} n_2 ( \widetilde{\widetilde{M}}_{12} - h_1), \\
\partial_t h_2 + v \cdot \nabla_{x} h_2 = \nu_{22} n_2 (\widetilde{\widetilde{G}}_2 - h_2) + \nu_{21} n_1 ( \widetilde{\widetilde{M}}_{21} - h_2),
\end{split}
\end{align}
where $\widetilde{\widetilde{G}}_1, \widetilde{\widetilde{G}}_2, \widetilde{\widetilde{M}}_{12}$ and $\widetilde{\widetilde{M}}_{21}$ are given by
$$\widetilde{\widetilde{G}}_1 = \int  |\eta_{l_1}|^2 G_1 d\eta_{l_1}, \quad \widetilde{\widetilde{M}}_{12} = \int  |\eta_{l_1}|^2  M_{12} d\eta,$$
$$\widetilde{\widetilde{G}}_2 = \int  |\eta_{l_2}|^2  G_2 d\eta_{l_2}, \quad \widetilde{\widetilde{M}}_{21} = \int  |\eta_{l_2}|^2  M_{21} d\eta_{l_2}.$$
If we compute $\widetilde{G}_k$, $\widetilde{M}_{12}$, $\widetilde{M}_{21}$, $\widetilde{\widetilde{G}}_k$, $\widetilde{M}_{12}$, $\widetilde{M}_{21}$ for $k=1,2$, we get
\begin{align} 
\begin{split}
\widetilde{G}_k(x,v,t) &= \frac{n_k}{\sqrt{ \det (2 \pi\frac{\Lambda_k^{ES}}{m_k})} }  \exp(-m_k (v-u_k) \cdot (\Lambda_k^{ES})^{-1} \cdot (v-u_k)  ), \quad k=1,2,
%\\
%M_2(x,v,\eta_{l_2},t) &= \frac{n_2}{\sqrt{2 \pi \frac{\Lambda_2}{m_2}}^d } \frac{1}{\sqrt{2 \pi \frac{\Theta_2}{m_2}}^{l_2}} \exp({- \frac{|v-u_2|^2}{2 \frac{\Lambda_2}{m_2}}}- \frac{|\eta_{l_2}|^2}{2 \frac{\Theta_2}{m_2}}),
\\
\widetilde{M}_{12}(x,v,t) &= \frac{n_{1}}{\sqrt{2 \pi \frac{\Lambda_{12}}{m_1}}^d }  \exp({- \frac{|v-u_{12}|^2}{2 \frac{\Lambda_{12}}{m_1}}}),
\\
\widetilde{M}_{21}(x,v,t) &= \frac{n_{2}}{\sqrt{2 \pi \frac{\Lambda_{21}}{m_2}}^d } \exp({- \frac{|v-u_{21}|^2}{2 \frac{\Lambda_{21}}{m_2}}}),
\end{split}
%\label{BGKmix}
\end{align}
\begin{align} 
\begin{split}
\widetilde{\widetilde{G}}_k(x,v,t) &= \frac{n_k}{\sqrt{ \det (2 \pi\frac{\Lambda_k^{ES}}{m_k})} }  \exp(-m_k (v-u_k) \cdot (\Lambda_k^{ES})^{-1} \cdot (v-u_k)  ) \Theta_k, \quad k=1,2,
%\\
%M_2(x,v,\eta_{l_2},t) &= \frac{n_2}{\sqrt{2 \pi \frac{\Lambda_2}{m_2}}^d } \frac{1}{\sqrt{2 \pi \frac{\Theta_2}{m_2}}^{l_2}} \exp({- \frac{|v-u_2|^2}{2 \frac{\Lambda_2}{m_2}}}- \frac{|\eta_{l_2}|^2}{2 \frac{\Theta_2}{m_2}})
\\
\widetilde{\widetilde{M}}_{12}(x,v,t) &= \frac{n_{1}}{\sqrt{2 \pi \frac{\Lambda_{12}}{m_1}}^d }  \exp({- \frac{|v-u_{12}|^2}{2 \frac{\Lambda_{12}}{m_1}}}) \Theta_{12},
\\
\widetilde{\widetilde{M}}_{21}(x,v,t) &= \frac{n_{2}}{\sqrt{2 \pi \frac{\Lambda_{21}}{m_2}}^d } \exp({- \frac{|v-u_{21}|^2}{2 \frac{\Lambda_{21}}{m_2}}}) \Theta_{21},
\end{split}
%\label{BGKmix}
\end{align}
We are able to compute all the six Maxwell distributions because we can compute all moments by the previous computation.
\subsection{A mixture consisting of a mono and a diatomic gas}
\label{sec9}
We consider now the special case of two species, one species is mono-atomic and has only translational degrees of freedom $l_1=0$, the other one is diatomic and has in addition two rotational degrees of freedom $l_2=2$ and both have the number of degrees of freedom in translations given by $d$ with $d\in \mathbb{N}$. In this case the total number of rotational degrees of freedom is $M=l_1+l_2=2$ since in sum we have two possible rotations. Our variables for the rotational energy degrees of freedom are $\eta \in \mathbb{R}^2$, $ \eta_{l_1} = \begin{pmatrix}
0 \\ 0 
\end{pmatrix},$ $ \eta_{l_2}= \eta $, since $\eta_{l_k}$ coincides with $\eta$ in the components corresponding to the rotational degrees of freedom of species $k$ and is zero in the other components. 
So our distribution function $f_1(x,v,t)$ of species $1$ depends on $x,v,$ and $t$ and our distribution function $f_2(x,v,\eta,t)$ of species $2$ depends on $x,v, \eta$ and $t$. The moments of $f_1$ are given by 
\begin{align}
\int f_1(v) \begin{pmatrix}
1 \\ v  \\ m_1 |v-u_1|^2  \\ m_1 (v-u_1(x,t)) \otimes (v-u_1(x,t))
\end{pmatrix} 
dv =: \begin{pmatrix}
n_1 \\ n_1 u_1  \\ d n_1 T_1^{t}  \\ \mathbb{P}_1,
\end{pmatrix} 
%\label{moments}
\end{align} 
and the moments of species $2$ are given by 
\begin{align}
\int f_2(v, \eta) \begin{pmatrix}
1 \\ v \\ \eta \\ m_2 |v-u_2|^2 \\ m_2 |\eta  |^2 \\ m_2 (v-u_2(x,t)) \otimes (v-u_2(x,t))
\end{pmatrix} 
dvd\eta=: \begin{pmatrix}
n_2 \\ n_2 u_2 \\ 0 \\ d n_2 T_2^{t} \\ l_2 n_2 T_k^{r} \\ \mathbb{P}_2
\end{pmatrix} .
%\label{moments}
\end{align} 
The third equality is an assumption. We could also consider a general $\bar{\eta}$.
Our model reduces to 
\begin{align} \begin{split} %\label{ESBGK}
\partial_t f_1 + \nabla_x \cdot (v f_1)   &= \nu_{11} n_1 (G_1(f_1) - f_1) + \nu_{12} n_2 (M_{12}(f_1,f_2)- f_1),
\\ 
\partial_t f_2 + \nabla_x \cdot (v f_2) &=\nu_{22} n_2 (G_2(f_2) - f_2) + \nu_{21} n_1 (M_{21}(f_1,f_2)- f_2), 
\end{split}
\end{align}
with the modified Maxwellians
{\footnotesize
\begin{align} 
\begin{split}
G_1(f_1)(x,v,t) &= \frac{n_1}{\sqrt{\det(2 \pi \frac{\Lambda^{ES}_1}{m_1})} }   \exp\left({- \frac{1}{2} (v-u_1) \cdot \left(\frac{\Lambda_1^{ES}}{m_1}\right)^{-1} \cdot (v-u_1)}\right), 
\\
G_2(f_2)(x,v,\eta,t) &= \frac{n_2}{\sqrt{\det(2 \pi \frac{\Lambda^{ES}_2}{m_2})} } \frac{1}{\sqrt{2 \pi \frac{\Theta_2}{m_2}}^{l_2}}  \exp\left({- \frac{1}{2} (v-u_2) \cdot \left(\frac{\Lambda_2^{ES}}{m_2}\right)^{-1} \cdot (v-u_2)}- \frac{1}{2} \frac{m_2 |\eta|^2}{\Theta_2}\right), 
\\
M_{12}(x,v,t) &= \frac{n_{12}}{\sqrt{2 \pi \frac{\Lambda_{12}}{m_1}}^d }  \exp \left({- \frac{|v-u_{12}|^2}{2 \frac{\Lambda_{12}}{m_1}}}\right),
\\
M_{21}(x,v,\eta,t) &= \frac{n_{21}}{\sqrt{2 \pi \frac{\Lambda_{21}}{m_2}}^d } \frac{1}{\sqrt{2 \pi \frac{\Theta_{21}}{m_2}}^{l_2}} \exp \left({- \frac{|v-u_{21}|^2}{2 \frac{\Lambda_{21}}{m_2}}}- \frac{|\eta|^2}{2 \frac{\Theta_{21}}{m_2}}\right),
\end{split}
\end{align}}
where
\begin{align*}
\Lambda_1^{ES} &= (1- \mu_1) T_1^{t} \textbf{1}_n + \mu_1 \frac{\mathbb{P}_1}{n_1}, \\
\Lambda_2^{ES} &= (1- \mu_2) \Lambda_2 \textbf{1}_n + \mu_2 \frac{\Lambda_2^{ten}}{n_2}, \\
\end{align*}
 with $\mu_k \in \mathbb{R}$, $k=1,2$. %The tensor $\Lambda_2^{ten}$ will be chosen such that we have the temperature $\Lambda_2$ on the diagonals and the elements of the pressure tensor of $f_2$ on the non-diagonal part. 
 For $\Lambda_2^{ten}$ we use the additional relaxation equation 
 \begin{align}
\partial_t \widehat{G}_2 + v \cdot \nabla_x \widehat{G}_2 = \frac{\nu_{22} n_2}{Z_r^2} \frac{d+2}{d} (\widetilde{G}_2 - \widehat{G}_2) + \nu_{22} n_2 (G_2-f_2) + \nu_{21} n_1 (M_{21} - f_2), 
\label{kin_TempESapp}
\end{align}
Here $\widehat{G}_2$ is given by 
\begin{align}
\widehat{G}_2= \frac{n_2}{\sqrt{\det( 2 \pi \frac{\Lambda_2^{ten}}{m_2})}} \exp \left(- \frac{1}{2} (v-u_2) \cdot \left(\frac{\Lambda_2^{ten}}{m_2}\right)^{-1}\cdot (v- u_2)- \frac{m_2|\eta|^2}{2 T_2^{r}} \right), \quad k=1,2.
\label{Max_equapp}
\end{align}
and $\widetilde{G}_2$ is given by
\begin{align}
\widetilde{G}_2 = \frac{n_2}{\sqrt{\det(2\pi \frac{T_2^{ten}}{m_2}})} \frac{1}{\sqrt{2 \pi \frac{T_2}{m_2}}^{2}} \exp \left( - \frac{1}{2}(v-u_2) \cdot \left( \frac{T_2^{ten}}{m_2}\right)^{-1} \cdot (v-u_2) - \frac{1}{2}  \frac{m_2 |\eta|^2 }{ T_2} \right),
\end{align}
where the components of $T_2^{ten}$ are defined in the following way. 
\begin{align}
\begin{split}
(T_2^{ten})_{ii} &= T_2 := \frac{d}{d+2} \Lambda_2 + \frac{2}{d+2} \Theta_2 \hspace{2.6cm}\text{for} \quad i=1,\dots d, \\
(T_2^{ten})_{ij} &= \frac{d}{d+2} (\mathbb{P}_2)_{ij} \hspace{4.5cm} \text{for} \quad i,j = 1, \dots d, i \neq j,
\end{split}
\label{Tenapp}
\end{align}
 We couple this with conservation of internal energy of species $2$
 \begin{align}
\frac{d}{2} n_2 \Lambda_2 = \frac{d}{2} n_2 T_2^{t} +\frac{l_2}{2} n_2 T_2^{r} - \frac{l_2}{2} n_2 \Theta_2. %\label{internal}
\end{align} 
%and a relaxation equation for species $2$ given by
%\begin{align}
%\partial_t G_2 + v \cdot \nabla_x G_2 = \frac{\nu_{22} n_2}{Z_r^2} \frac{d+l_2}{d} (\tilde{M}_2 - G_2),
%%\label{kin_Temp}
%\end{align}
%where $Z_r^2$ is a given parameter corresponding to the rotational frequencies of the gas molecules of species $2$.  $\tilde{M}_2$ is given by 
%\begin{align}
%\tilde{M}_2= \frac{n_2}{\sqrt{2 \pi \frac{\frac{1}{d+l_2} (d \Lambda_2 + l_2 \Theta_2)}{m_2}}^{d+l_2}} e^{- \frac{m_2 |v-u_2|^2}{2 \frac{1}{d+l_2} (d \Lambda_2 + l_2 \Theta_2)}- \frac{m_2|\eta|^2}{2\frac{1}{d+l_2} (d \Lambda_2 + l_2 \Theta_2)}}.
%%\label{Max_equ}
%\end{align}
If we multiply \eqref{kin_TempESapp} by $|\eta|^2$ and integrate with respect to $v$ and $\eta$, this leads to the following macroscopic equation %which describes the relaxation of the temperature $\Theta_2$ towards the temperature $\Lambda_2$:
\begin{align}
\begin{split}
\partial_t ( \Lambda_2^{ten}) + u_2 \cdot \nabla_x ( \Lambda_2^{ten}) = \frac{\nu_{22} n_2}{Z_r^2} \frac{d+2}{d} ( T_2^{ten} - \Lambda_2^{ten}) &+ \nu_{22} n_2 (\Lambda_2^{ES} - \mathbb{P}_2) \\&+ \nu_{21} n_1 (\Theta_{12} - T_2^{r}).
\end{split}
%\label{eqten}
\end{align}
If we assume that \begin{align*} n_{12}=n_1 \quad \text{and} \quad n_{21}=n_2, \\ u_{12}= \delta u_1 + (1- \delta) u_2, \quad \delta \in \mathbb{R},
%\label{density} 
\end{align*}
and
\begin{align}
\begin{split}
\Lambda_{12} &=  \alpha T_1^{t} + ( 1 - \alpha) \Lambda_2 + \gamma |u_1 - u_2 | ^2,  \quad 0 \leq \alpha \leq 1, \gamma \geq 0, 
%\label{contemp}
\end{split}
\end{align}

we have conservation of mass, total momentum and total energy provided that
\begin{align}
u_{21}=u_2 - \frac{m_1}{m_2} \varepsilon (1- \delta ) (u_2 - u_1),
%\label{veloc}
\end{align}
\begin{align}
\begin{split}
\Lambda_{21} + \frac{l_2}{d} \Theta_{21}=\left[ \frac{1}{d} \varepsilon m_1 (1- \delta) \left( \frac{m_1}{m_2} \varepsilon ( \delta - 1) + \delta +1 \right) - \varepsilon \gamma \right] |u_1 - u_2|^2 \\+ \varepsilon ( 1 - \alpha ) T_1^{t} + ( 1- \varepsilon ( 1 - \alpha)) \Lambda_2 + \frac{l_2}{d}  \Theta_2.
%\label{temp}
\end{split}
\end{align}
We take into account the symmetry of the  temperatures and choose
\begin{align}
\begin{split}
\Lambda_{21} &= \varepsilon (1- \alpha) \Lambda_1 + (1- \varepsilon(1-\alpha)) \Lambda_2 \\ &+ \left[ \frac{1}{d} \varepsilon m_1 (1- \delta) \left( \frac{m_1}{m_2} \varepsilon ( \delta - 1) + \delta +1 \right) - \varepsilon \gamma \right] |u_1 - u_2|^2, \end{split} \\
\Theta_{21} &=\Theta_2.
\end{align}
\newpage

%\bibliographystyle{amsalpha}
%\bibliography{references} % Reihenfolge nach Erscheinen: 16, 15, 4, 11, 9, 6

\end{document}